\theoremstyle{plain}
\newtheorem{theorem}{Theorem}[section]
\newtheorem{lemma}[theorem]{Lemma}
\newtheorem{corollary}[theorem]{Corollary}
\newtheorem{proposition}[theorem]{Proposition}
\newtheorem{question}[theorem]{Question}
\theoremstyle{definition}
\newtheorem{definition}[theorem]{Definition}
\newtheorem{remark}[theorem]{Remark}
\newtheorem{example}[theorem]{Example}
\newcommand{\N}{\mathbb{Z}^{+}}
\newcommand{\Z}{\mathbb{Z}}
\newcommand{\Q}{\mathbb{Q}}
\newcommand{\R}{\mathbb{R}}
\renewcommand{\O}{\mathcal{O}}
\newcommand{\CC}{\mathcal{C}}
\newcommand{\Ccl}{\CC^{\mathrm{cl}}}
\newcommand{\Cdiag}{\CC^{\mathrm{diag}}}
\newcommand{\Cfr}{\CC^{\mathrm{fr}}}
\newcommand{\Cclfr}{\CC^{\mathrm{cl,fr}}}
\newcommand{\nc}{\mathrm{nc}}
\newcommand{\cl}{\mathrm{cl}}
\newcommand{\diag}{\mathrm{diag}}
\newcommand{\mdiag}{m^{\mathrm{diag}}}
\newcommand{\OPlus}{\O^{+}}
\newcommand{\U}{\mathcal{U}}
\newcommand{\UPlus}{\U^{+}}
\newcommand{\p}{\mathfrak{p}}
\newcommand{\qf}[1]{\langle #1 \rangle}
\newcommand{\OO}{\OPlus_K/\U_K^2}
\DeclareMathOperator{\Tr}{Tr}
\DeclareMathOperator{\NN}{N}
\let\epsilon\varepsilon
\let\phi\varphi
\newcommand{\keywords}[1]{\noindent\textbf{Keywords:} #1}
\newcommand{\subjclass}[2]{\noindent\textbf{#1 Mathematics Subject Classification:} #2.}
\title{Universality criterion sets for quadratic forms\\ over number fields}
\author[1,a]{V\' \i t\v ezslav Kala}
\author[2,b]{Jakub Kr\'asensk\'y}
\author[1,3,c]{Giuliano Romeo}
\affil[1]{Charles University, Faculty of Mathematics and Physics, Department of Algebra, Sokolovsk\'{a} 83, 186 75 Praha 8, Czech Republic}
\affil[2]{Czech Technical University in Prague, Faculty of Information Technology, Department of Applied Mathematics, Thákurova~9, 160~00 Praha~6, Czech Republic}
\affil[3]{Politecnico of Torino, Department of Mathematical Sciences Giuseppe Luigi Lagrange, Corso Duca degli Abruzzi 24, Torino, Italy}
\affil[a]{vitezslav.kala@matfyz.cuni.cz}
\affil[b]{jakub.krasensky@fit.cvut.cz}
\affil[c]{giuliano.romeo@polito.it}
\begin{document}

\maketitle

\begin{abstract} In analogy with the 290-Theorem of Bhargava--Hanke, 
a criterion set is a finite subset $C$ of the totally positive integers in a given totally real number field such that if a quadratic form represents all elements of $C$, then it necessarily represents all totally positive integers, i.e., is universal. We use a novel characterization of minimal criterion sets to show that they always exist and are unique, and that they must contain certain explicit elements. We also extend the uniqueness result to the more general setting of representations of a given subset of the integers.

\keywords{universal quadratic form, quadratic lattice, totally real number field, criterion set}
\subjclass{2020}{11E12, 11E20, 11E25, 11R04, 11R80}
\end{abstract}

\section{Introduction}

The representations of integers by quadratic forms present numerous problems that have significantly influenced the development of mathematics from its pre-history till today. In particular, \textit{universal} quadratic forms over a totally real number field $K$ (i.e., those that represent all totally positive algebraic integers) provide several mysterious arithmetic invariants of $K$, such as the \textit{minimal rank} of a universal form and a universality \textit{criterion set}. The latter is a finite set of integers whose representability by a given form implies the universality of the form, such as the sets $\{1,2,3,\dots,15\}$ and $\{1,2,3,\dots,290\}$ from the famous 15- and 290-Theorems for $K=\Q$ of Conway--Schneeberger and Bhargava--Hanke \cite{Bh, BH}.

In order to be more precise, let us continue the discussion in the geometric language of \textit{quadratic lattices} and denote by $m_K$ the minimal rank of a universal quadratic lattice over $K$
(see Section \ref{se:prel} for any undefined notions). This yields a slight generalization as quadratic forms correspond to \textit{free} lattices.

While the fact that $m_\Q=4$ is cleanly explained by the universality of the sum of 4 squares $x^2+y^2+z^2+w^2$ and a local argument for the non-universality of ternary forms, we do not know a clear reason for Bhargava--Hanke's constant 290, except for the actual, rather involved and computational, proof (perhaps because there is simply none).

Similarly, over number fields, we now have a basic understanding of the minimal rank $m_K$ thanks to the efforts of many mathematicians over the last eighty years.
Maa{\ss} \cite{M} showed that 3, the smallest possible value of $m_K$, is attained for $K=\Q(\sqrt 5)$, where the sum of 3 squares is universal. Despite Siegel's theorem \cite{Si} that there are no other cases with universal sum of any number of squares, universal forms always exist thanks to the asymptotic local-global principle of Hsia--Kitaoka--Kneser \cite{HKK}. While Kitaoka's influential conjecture that there are only finitely many fields $K$ with $m_K=3$ remains open, Chan--Kim--Raghavan \cite{CKR} resolved it for \textit{classical} quadratic forms (i.e., under the assumption that the associated bilinear form attains integral values) over real quadratic fields $\Q(\sqrt D)$ -- they admit universal classical ternaries only for $D=2,3,5$. Recently, Kim--Kim--Park \cite{KKP} covered also the non-classical case, and their results have been made more explicit by  Kala--Kr\' asensk\' y--Park--Yatsyna--\. Zmija \cite{KK+}. In general, we only have the 
theorem of  Earnest--Khosravani \cite{EK} that, in odd degrees $[K:\Q]$, there are no universal ternary lattices for local reasons, and the 
weaker result of Kala--Yatsyna \cite{KY} asserting finiteness when the degree $[K:\Q]$ is bounded.
Regardless of the difficulty of studying ternary lattices, the minimal ranks $m_K$ are often large, both over quadratic fields \cite{BK1, Ka1, KYZ} and in higher degrees \cite{KT, KTZ, Man, Ya}. For further related results, see, e.g., \cite{Ea,KL,KY1}, works on indefinite forms \cite{HHX,XZ}, 
or the surveys \cite{Ka2,Ki}.

Universality criterion sets for positive definite lattices are much more inscrutable. It was only a few years ago when Chan--Oh \cite{CO} established their existence, however, without implying anything about their structures or sizes. The only case known explicitly is due to Lee \cite{Le} 
who proved that a classical quadratic lattice over $\Q(\sqrt5)$ is universal if it represents the 7 elements
\[
1,2,\frac{5+\sqrt5}{2}, \frac{7\pm\sqrt5}{2},2\cdot\frac{5+\sqrt5}{2}, 3\cdot\frac{5+\sqrt5}{2}.
\]

However, more is known in the parallel situation of representations of quadratic forms of given rank (e.g., a positive definite quadratic form over $\Z$ is $k$-universal if it represents all positive definite forms over $\Z$ of rank $k$) besides the basic finiteness result for criterion sets, due to Kim--Kim--Oh \cite{KKO2} over $\Z$ and Chan--Oh \cite{CO} again in general. 
Kim--Kim--Oh \cite{KKO} found a 6-element criterion set for 2-universality of classical forms over $\Z$, whose uniqueness was then established by Kominers \cite{Kom}. The criterion set for 8-universality is again known and unique \cite{Oh,Kom2}, but Kim--Lee--Oh showed that they are \textit{not} unique for any $k\geq 9$ \cite{KLO}. This phenomenon of  non-uniqueness was first observed in the context of $S$-universality, i.e., the study of forms representing all elements of a given set $S$ of forms of rank $k$, by Elkies--Kane--Kominers \cite{EKK} who found examples with $k=3$ and $9$.

In the local case, there are actually many results describing the structure of universal forms and criterion sets \cite{Be1,Be2,HH,HHX,XZ}.
Criterion sets are also fairly well understood for Hermitian forms \cite{KKP0} and for sums of polygonal numbers, see, e.g., \cite{JuK, KaL}. Conversely, finite criterion sets do not exist for indefinite forms \cite{XZ} and for higher degree forms \cite{KP}. 

\medskip

Our goal is to significantly advance the state of the art in the study of criterion sets for universality over general totally real number fields $K$.
First of all, in sharp contrast with the preceding discussion, minimal criterion sets \textit{are unique} in this case.

\begin{theorem} \label{th:mainunique}
For every totally real number field $K$, there exists a unique criterion set $\CC_K$ which is minimal with respect to inclusion. This set $\CC_K$ is finite and consists precisely of the critical elements (see Definition $\ref{de:critical}$).
\end{theorem}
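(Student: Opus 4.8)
The plan is to reformulate the defining property of a criterion set as a covering condition and then to prove two opposite inclusions. For a quadratic lattice $L$ over $\O_K$ let $Q(L)$ denote the set of totally positive integers represented by $L$, and write $M(L)$ for its complement in the totally positive integers; thus $L$ is universal exactly when $M(L)=\emptyset$. Unwinding the definition, a finite set $C$ is a criterion set if and only if $C\cap M(L)\neq\emptyset$ for every non-universal $L$, i.e.\ $C$ meets the set of non-represented elements of every non-universal lattice. I would fix once and for all a well-ordering $\prec$ of the totally positive integers that refines both the norm and the square-divisibility relation (so that if $\alpha/\beta$ is a totally positive square of an integer and $\beta\neq\alpha$ then $\beta\prec\alpha$); such a well-ordering exists because passing to a proper square-divisor does not increase the norm. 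Concretely, the critical elements of Definition~\ref{de:critical} are then the possible truants, namely the elements arising as $\min_{\prec}M(L)$ for some non-universal $L$ (taken, within each class modulo multiplication by unit squares, to be the $\prec$-least representative). Writing $\Gamma$ for the set of all critical elements, Theorem~\ref{th:mainunique} will follow once I show that (a) $\Gamma$ is a criterion set and (b) $\Gamma$ is contained in every criterion set.

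Direction (a) is immediate from the reformulation: if $L$ is non-universal then $M(L)\neq\emptyset$, so it has a $\prec$-least element $\alpha=\min_{\prec}M(L)$, which is critical by construction and lies in $M(L)$. Hence $\Gamma\cap M(L)\neq\emptyset$ for every non-universal $L$, which is exactly the assertion that $\Gamma$ is a criterion set.

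Direction (b) is where the real work lies. Let $\alpha$ be critical, witnessed by a lattice $L$ with $\min_{\prec}M(L)=\alpha$, so that $L$ represents every $\beta\prec\alpha$ but not $\alpha$. Suppose, for contradiction, that some criterion set $C$ omits $\alpha$. Every $c\in C$ with $c\prec\alpha$ is already represented by $L$, and since $\alpha\notin C$ the remaining elements of $C$ satisfy $c\succ\alpha$ and are finite in number. The crucial input is an extension lemma: given a lattice $L$ with $\alpha\notin Q(L)$ and a finite set $F$ of totally positive integers with $\alpha\notin F$ (and with no element of $F$ equal to $\alpha$ times a unit square), one can enlarge $L$ to a lattice $L'$ with $F\subseteq Q(L')$ while keeping $\alpha\notin Q(L')$. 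Granting this, I apply it with $F=\{c\in C:\ c\succ\alpha\}$ to obtain a non-universal lattice $L'$ representing all of $C$, contradicting that $C$ is a criterion set; hence $\alpha\in C$, proving (b).

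The extension lemma is the main obstacle, and it is genuinely more delicate than over $\Q$. I would prove it by adjoining orthogonal summands, representing each new $\gamma\in F$ via $L\perp\langle\gamma\rangle$ and then verifying that $\alpha$ remains unrepresented, i.e.\ that $\alpha=q+\sum_i\gamma_i t_i^2$ has no solution with $q\in Q(L)\cup\{0\}$ and $t_i\in\O_K$. The difficulty is that the abundance of totally positive units means ``largeness'' is not controlled by a single embedding; the refinement of $\prec$ by square-divisibility is precisely what excludes the degenerate collision $\alpha=\gamma u^2$ (which would force $\alpha$ the moment $\gamma$ is represented), while the remaining coefficient vectors with $\sum_i\gamma_i t_i^2$ totally below $\alpha$ form a finite set by a Northcott-type bound, so only finitely many potential collisions must be dodged. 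Finally, finiteness and uniqueness are formal consequences: by Chan--Oh \cite{CO} a finite criterion set $C_0$ exists, and (b) gives $\Gamma\subseteq C_0$, so $\Gamma$ is finite; moreover (a) and (b) together show that $\Gamma$ is a criterion set equal to the intersection of all criterion sets, whence $\Gamma$ is the unique criterion set that is minimal with respect to inclusion.
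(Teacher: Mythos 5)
Your argument correctly establishes existence, uniqueness, and finiteness of the minimal criterion set, and your route is in part genuinely more elementary than the paper's: for the inclusion $\Gamma\subseteq C$ you only ever need to represent the \emph{finite} set $\{c\in C: c\succ\alpha\}$ while avoiding $\alpha$, and since every such $c$ has $\NN(c)\geq\NN(\alpha)$, the norm computation $\NN(\alpha)\geq\NN(\gamma_i t_i^2)\geq\NN(\gamma_i)\geq\NN(\alpha)$ kills all collisions outright (no Northcott-type dodging is needed; on the other hand, your extension lemma is false as literally stated without a hypothesis that the elements of $F$ have norm at least $\NN(\alpha)$ --- over $\Q$, adjoin $\qf{1}\perp\qf{4}$ to a lattice not representing $5$ --- so that hypothesis must be made explicit). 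This is a real simplification of the paper's argument \emph{for those clauses of the theorem}.

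The genuine gap is the clause ``consists precisely of the critical elements (see Definition \ref{de:critical}).'' You silently identify the critical elements with the truants, i.e.\ with the elements $\min_{\prec}M(L)$; but Definition \ref{de:critical} requires a lattice representing \emph{all} of $\OPlus_K\setminus\{\alpha\}$ (an infinite set), not merely all elements below $\alpha$ or some finite set. The implication ``critical $\Rightarrow$ truant'' is trivial, but the converse is the hard content of the paper's Proposition \ref{pr:critical}: starting from a lattice with truant $\alpha$, one must escalate infinitely often by $(S\setminus\{\alpha\})$-truants, check that $\alpha$ is never created, and then prove the process terminates --- which the paper does by forcing local $(S\setminus\{\alpha\})$-universality at the finitely many bad places and invoking the asymptotic local--global principle of Hsia--Kitaoka--Kneser \cite{HKK}. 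Your finite extension lemma does not yield this, and without it your proof shows only that the minimal criterion set equals the set of truants, leaving open the possibility that some truant is not critical, i.e.\ that the minimal criterion set strictly contains the set of critical elements. (Nor can this be patched formally from minimality: knowing that $\Gamma\setminus\{\alpha\}$ is not a criterion set produces a non-universal lattice representing $\Gamma\setminus\{\alpha\}$, not one representing $\OPlus_K\setminus\{\alpha\}$.) To close the gap you need the full strength of Proposition \ref{pr:critical}, at which point your argument essentially becomes the paper's.
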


This will be proved as a special case of Theorem \ref{th:unique}. 
However, before proceeding further, we must clarify a minor technicality in the statement of the theorem. Let $\O_K^+$ be the set of all totally positive integers in $K$ and $\U_K$ the unit group of $K$.  
Trivially,  a quadratic lattice  represents an element $\alpha\in\O_K^+$  if and only if it represents  $\alpha\epsilon^2$ for any unit $\epsilon\in\U_K$. As representability thus depends only on the class of $\alpha$ modulo squares of units (i.e., on its class in $\OO$), so does the criterion set. Therefore, no finite subset of $\O_K^+$ has a chance of being a \textit{unique} criterion set, e.g., in Lee's example above one could replace the element $1$ by the squared unit $(3+\sqrt 5)/2=((1+\sqrt 5)/2)^2$. To deal with this issue, we consider criterion sets as subsets of $\OO$ throughout the paper (see Definition \ref{de:crit}).

With essentially the same proof, we establish versions of Theorem \ref{th:mainunique} also for universality criterion sets 
\begin{itemize}
    \item $\Ccl_K$ for classical lattices,
    \item $\Cfr_K$ for free lattices = quadratic forms,
    \item $\Cclfr_K$ for classical free lattices = classical quadratic forms,
    \item $\Cdiag_K$ for diagonal lattices = diagonal quadratic forms.
\end{itemize}

\begin{theorem}\label{th:main2}
For every totally real number field $K$, we have 
\begin{equation}\label{eq:crit}
\Cdiag_K\subset \Cclfr_K= \Ccl_K\subset \Cfr_K=\CC_K.    
\end{equation}

All of these sets are closed under conjugation and under multiplication by totally positive units, i.e., if $\CC$ is any of the sets above, $\sigma\in\mathrm{Aut}(K/\Q)$, and $\epsilon\in\U_K^+$, then
$\sigma(\CC)=\epsilon\cdot\CC=\CC$.
\end{theorem}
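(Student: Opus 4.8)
The plan is to handle the inclusion chain and the two invariance properties by three largely independent mechanisms: a soft monotonicity argument for the inclusions of \eqref{eq:crit}, an explicit ``freeing'' construction for the two equalities, and the uniqueness from Theorem~\ref{th:mainunique} for the closure statements. First I would record the monotonicity of criterion sets under shrinking the class of lattices: if $\mathcal L_1\subseteq\mathcal L_2$ and $C$ is a criterion set for $\mathcal L_2$, then $C$ is a criterion set for $\mathcal L_1$, since any $L\in\mathcal L_1\subseteq\mathcal L_2$ representing $C$ is universal by the hypothesis on $\mathcal L_2$. Thus every criterion set for $\mathcal L_2$ is one for $\mathcal L_1$, and because the unique minimal criterion set is contained in every criterion set, the minimal one for $\mathcal L_1$ is contained in that for $\mathcal L_2$. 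Applying this to the evident inclusions of classes (diagonal $\subseteq$ classical free $\subseteq$ classical $\subseteq$ general, and classical free $\subseteq$ free $\subseteq$ general) yields at once
\[
\Cdiag_K\subseteq\Cclfr_K\subseteq\Ccl_K\subseteq\CC_K,\qquad \Cclfr_K\subseteq\Cfr_K\subseteq\CC_K,
\]
which are exactly the inclusions in \eqref{eq:crit}. This reduces the first statement to the two equalities $\Cclfr_K=\Ccl_K$ and $\Cfr_K=\CC_K$; since the inclusions ``$\subseteq$'' are already in hand, each equality follows once I show that a minimal criterion set for the free subclass is still a criterion set for the ambient class.

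The heart of the matter is therefore the following freeing lemma, which I expect to be the main obstacle: every non-universal totally positive definite $\O_K$-lattice $L$ can be enlarged to a non-universal \emph{free} lattice $L'\supseteq L$ representing at least the same totally positive integers, and $L'$ may be taken classical if $L$ is. Writing $L\cong\O_K^{\,n-1}\oplus\mathfrak a$ in Steinitz form and fixing one totally positive integer $\beta$ not represented by $L$, I would set $L'=L\perp M$, where $M=\mathfrak a^{-1}w$ is the rank-one lattice with $Q(w)=t_0$. The Steinitz classes $[\mathfrak a]$ and $[\mathfrak a^{-1}]$ cancel, so $L'$ is free; integrality of $L'$ (and classicality, when $L$ is classical) holds as soon as $t_0\in\mathfrak a^2$, and $L'\supseteq L$ only enlarges the set of represented integers. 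The delicate point is to keep $L'$ non-universal, i.e.\ to ensure that $L'$ still misses $\beta$; as $L$ misses $\beta$ and the sum is orthogonal, this amounts to arranging that $M$ represents no $t$ with $\beta-t$ totally positive or zero. Here I would use that every nonzero $x\in\mathfrak a^{-1}$ satisfies $|\NN(x)|\ge\NN(\mathfrak a)^{-1}$, whence $\max_j (x^{(j)})^2\ge\NN(\mathfrak a)^{-2/d}$ with $d=[K:\Q]$; choosing $t_0\in\mathfrak a^2$ totally positive with every conjugate satisfying $t_0^{(j)}>\beta^{(j)}\,\NN(\mathfrak a)^{2/d}$ (possible since $\mathfrak a^2$ is cofinal in $\OPlus_K$) forces each nonzero value $x^2t_0$ of $M$ to have some conjugate exceeding the corresponding conjugate of $\beta$. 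Then $\beta-x^2t_0$ is never totally positive, so $\beta$ is not represented by $L'$, proving the lemma. Feeding it into the monotonicity argument gives both equalities: if a free (resp.\ classical free) minimal criterion set $C$ were represented by a non-universal lattice in the larger class, the lemma would produce a non-universal free (resp.\ classical free) lattice representing $C$, contradicting that $C$ is a criterion set there.

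Finally, for the closure statements I would show that multiplication by a totally positive unit and each Galois automorphism permute criterion sets, and then invoke uniqueness. For $\epsilon\in\U_K^+$ the map $\beta\mapsto\epsilon\beta$ is a well-defined bijection of $\OO$, realized on lattices by scaling the quadratic form, $L\mapsto\epsilon L$; this operation preserves each of the five classes and satisfies ``$L$ represents $\beta$ if and only if $\epsilon L$ represents $\epsilon\beta$'', hence preserves universality and carries criterion sets bijectively to criterion sets. Likewise, since $K$ is totally real, each $\sigma\in\mathrm{Aut}(K/\Q)$ permutes the real embeddings, so $\beta\mapsto\sigma(\beta)$ preserves total positivity and integrality; it is realized by $L\mapsto\sigma(L)$, which preserves classicality, freeness and diagonality and satisfies ``$L$ represents $\beta$ if and only if $\sigma(L)$ represents $\sigma(\beta)$''. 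In both cases the map sends criterion sets to criterion sets and, being a bijection, sends the unique minimal one to the unique minimal one; by Theorem~\ref{th:mainunique} it must fix it. This gives $\sigma(\CC)=\epsilon\cdot\CC=\CC$ for each of the five sets, completing the proof.
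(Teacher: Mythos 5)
Your proposal is correct and follows essentially the same route as the paper: the inclusions come from monotonicity of criterion sets under shrinking the lattice class together with the uniqueness of the minimal criterion set (Theorem \ref{th:unique}), the two equalities come from orthogonally appending a rank-one lattice scaled by an ideal in the inverse Steinitz class so that the result is free (and classical when the original is) yet still misses the chosen element, and the closure statements come from the invariance of the whole setup under scaling by $\epsilon\in\U_K^+$ and under $\sigma\in\mathrm{Aut}(K/\Q)$ combined with uniqueness. The only cosmetic difference is in how you prevent the appended summand from representing the missed element: you impose an archimedean largeness condition on $t_0$ relative to $\NN(\mathfrak a)^{2/d}$, whereas the paper (Theorem \ref{th:free}) simply takes $\beta=\alpha+1$ of larger norm and uses the inequality $\NN(\gamma+\delta)\geq\NN(\delta)$ for $\gamma\succeq 0$, $\delta\succ 0$.
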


This will be established as part of Theorems \ref{th:unique} and \ref{th:free} and Corollary \ref{co:closed}.

In Corollary \ref{pr:quadratic} we make the observation that there is a universal diagonal lattice of rank equal $\#\Cdiag_K$. Thus the cardinalities of the criterion sets from \eqref{eq:crit} satisfy
$$m_K\leq 
\#\Cdiag_K\leq \#\Ccl_K\leq \#\CC_K,$$
and so we know that they are often large by the results on large values of $m_K$ discussed above.
In particular, there are only finitely many real quadratic fields $K$ where $\#\Cdiag_K\leq 7$ thanks to the result of Kim--Kim--Park \cite{KKP}.

Since real quadratic fields with totally positive fundamental unit have density 1, Theorem \ref{th:main2} has a curious consequence (Corollary \ref{co:density}) that
for density 1 of real quadratic fields $K$, the criterion sets from \eqref{eq:crit} have even size
(and are arbitrarily large by \cite{KYZ}).

\medskip

We then turn to establishing that certain elements are always contained in the criterion sets. This is in particular the case of \textit{indecomposables}, i.e., elements $\alpha\in\O_K^+$ that cannot be decomposed as $\alpha=\beta+\gamma$ with $\beta,\gamma\in\O_K^+$. Indecomposable elements have long played crucial role in the study of universal forms and lattices, ranging from Siegel \cite{Si}, Kim \cite{Ki2}, to almost all of the recent works discussed above in relation to the minimal rank $m_K$. There has also been considerable interest in estimating norms and other properties of indecomposables \cite{CL+,DS,JK,Ka1.5,Ti,TV}.

In Theorem \ref{th:indecomposable}, we show that the criterion sets must contain all \textit{squarefree} indecomposables (whose number can be estimated analytically in real quadratic fields \cite{BK2} as a variation of Kronecker’s limit formula). Under certain assumptions on squarefreeness and non-existence of small elements, all the elements $1, 2, 3, 5, 6, 7, 10, 14, 15$ from the Conway--Schneeberger 15-Theorem are also necessarily contained in $\Cdiag_K$ (Proposition \ref{pr:15iscritical}), and likewise for the Bhargava--Hanke 290-Theorem (Proposition \ref{pr:290iscritical}).
Note that the follow-up paper \cite{KR} will develop the theory of escalations over number fields and use it to obtain explicit computational results on criterion sets for $\Q(\sqrt 2),\Q(\sqrt 3),$ and $\Q(\sqrt 5)$.

Although we focused the preceding discussion on the slightly simpler case of universality criterion sets, 
there has been a lot of interest also in the study of quadratic forms representing only a given subset of the integers,
especially over $\Z$ in connection with the conjectural 451-Theorem of Rouse \cite{Ro} and its variants \cite{BC,De,DR}.
Thus, 
throughout the paper we work with $S$-universal lattices, i.e., lattices that represent all elements of a given set $S\subset\OO$; we denote the corresponding criterion sets as $\Cdiag_S \subset \Ccl_S \subset \CC_S\subset S$ (see Definitions \ref{de:crit} and \ref{def:C_S}).
In particular, Theorems \ref{th:unique} and \ref{th:free} extend our aforementioned main theorems to this setting.

In Theorem \ref{th:generalized} we then show that one can in fact remove the condition $\CC \subset S$ from the definition of the criterion set and still obtain the same uniqueness result; this was not previously known even for $K=\Q$.

We conclude the paper by giving a list of open problems in Section \ref{se:6} and commenting on possible approaches towards them.

\section{Preliminaries and notation}\label{se:prel}

In the following, let $K$ be a totally real number field, $\O_K$ its ring of integers and $\U_K \subset \O_K$ its group of units. If $d$ is the degree of $K$ over $\mathbb{Q}$, then there exist $d$ distinct embeddings $\sigma_i:K\hookrightarrow \mathbb{R}$, $i=1,\ldots,d$. 
The group of automorphisms of $K$ that fix $\Q$ is denoted by $\mathrm{Aut}(K/\Q)$. 
We denote by $\NN$ and $\Tr : K \to \Q$  the norm and trace from $K$ to $\mathbb{Q}$, i.e., the product and sum of all the embeddings $\sigma_i$. 

For $\alpha,\beta\in K$, we say that $\alpha$ is totally greater than $\beta$, and denote it by $\alpha \succ \beta$ (or $\beta\prec\alpha$), if $\sigma_i(\alpha)>\sigma_i(\beta)$ for all $\sigma_i$; $\alpha\succeq\beta$ (and $\beta\preceq\alpha$) means that  $\alpha \succ \beta$ or $\alpha=\beta$. 
We write $\OPlus_K$ for the set of totally positive integers, i.e., elements $\alpha\in\O_K$ such that $\alpha\succ 0$, and $\UPlus_K$ for the group of totally positive units.

Instead of working with quadratic forms (i.e., homogeneous polynomials of degree $2$), we use the geometric language of quadratic lattices. For notations and terminology about quadratic lattices over number fields, we refer the reader to the standard reference \cite{OM}. However, we repeat some of the most important notions here.

By a \textit{lattice} $L$ we shall always mean a totally positive definite integral quadratic $\O_K$-lattice, i.e., $L$ is a finitely generated torsion-free $\O_K$-module endowed with a quadratic map $Q: L \to \O_K$ such that, for all nonzero $v\in L$, $Q(v)\in\O_K^+$. Recall that a \emph{quadratic map} is a map $Q$ such that 1) $Q(\alpha v) = \alpha^2 Q(v)$ for all $\alpha \in \O_K$, $v \in L$, and 2) the map $B(v,w)=\bigl(Q(v+w)-Q(v)-Q(w)\bigr)/2$ is bilinear. We say that the lattice $L$ is \textit{classical} if $B(v,w)\in\O_K$ for all $v,w\in L$.

We say that a lattice is \emph{free} if the underlying $\O_K$-module is free; this is always the case if $h(K)=1$, i.e., the field $K$ has class number $1$. Free lattices are in one-to-one correspondence with quadratic forms understood as polynomials: For a free lattice with basis $v_1, \ldots, v_n$, we can define the corresponding quadratic form as $\phi(x_1,\ldots, x_n) = Q(x_1v_1+\cdots + x_nv_n)$, and vice versa, $\phi$ can easily be used to define $Q$ on a free lattice. Generally, if $h(K) > 1$, there are non-free lattices. However, by the structure theorem for Dedekind domains, the underlying module of a lattice can be written as $\O_K v_1 + \cdots + \O_K v_{n-1} + \mathfrak{a}v_n$ where the vectors $v_1, \ldots, v_n$ are linearly independent (they are sometimes called a \emph{pseudobasis}, although the term is more general) and $\mathfrak{a}$ is a fractional ideal \cite[81:5]{OM}. The number of elements $n$ in the pseudobasis is an invariant of $L$, called the \emph{rank} of~$L$ \cite[81:6]{OM}. The class of $\mathfrak{a}$ in the class group of $K$ is another invariant of $L$, called its \emph{Steinitz class}. Note that, by \cite[\S 81C]{OM} (or \cite[Theorem 1.39]{Nar}), the Steinitz class of a lattice given as $L=\mathfrak{a}_1 v_1 + \cdots  + \mathfrak{a}_nv_n$ is $\mathfrak{a}_1 \cdots   \mathfrak{a}_n$, and $L$ is free if and only if $\mathfrak{a}_1 \cdots   \mathfrak{a}_n$ is a principal fractional ideal. 

Formally, one should always write a quadratic lattice as a tuple $(L,Q)$; however, all our lattices will be quadratic, and, if we do not specify otherwise, the corresponding quadratic map will always be denoted by $Q$. Two lattices are \emph{isometric} if there exists an $\O_K$-linear bijection between them which respects the quadratic map, i.e., we write $L_1 \simeq L_2$ or more precisely $(L_1,Q_1) \simeq (L_2,Q_2)$ if there exists an $\O_K$-linear bijection $\iota : L_1 \to L_2$ such that $Q_1(v) = Q_2\bigl(\iota(v)\bigr)$ for all $v \in L_1$. Typically we only study quadratic lattices up to isometry. The \emph{orthogonal sum} of $(L_1,Q_1)$ and $(L_2,Q_2)$, denoted by $L_1 \perp L_2$, is the $\O_K$-module $L_1 \oplus L_2$ (direct sum of modules) equipped with the map $Q(x_1 \oplus x_2) = Q_1(x_1) + Q_2(x_2)$.

For $a_1,a_2,\ldots,a_n\in\O_K^+$, we denote by $\qf{a_1,a_2,\ldots,a_n}$ the quadratic form $Q(x_1,\dots,x_n)=a_1x_1^2+a_2x_2^2+\dots+a_nx_n^2$, as well as  the corresponding \textit{diagonal lattice} $(\O_K^n,Q)$ (by a slight abuse of notation which should cause no confusion). 
As the term ``diagonal lattice'' is not entirely standard, let us stress that we will always use it in this sense, i.e., to mean a ``lattice corresponding to a diagonal form''. Clearly, every diagonal lattice is free and classical.

We say that the lattice $L$ \emph{represents} $\alpha\in\OPlus_K$, and write $\alpha\to L$, if there exists $v\in L$ such that $Q(v)=\alpha$. A lattice $L$ representing every element of $\OPlus_K$ is called \emph{universal}.

By $\OO$ we denote the set of classes of elements of $\O_K^+$ up to multiplication by $\U_K^2$, i.e., by squares of units.
Observe that if $\alpha\in\OPlus_K$ and $\epsilon \in\U_K$, then $\alpha\to L\Leftrightarrow\alpha\epsilon^2\to L$. Thus when talking about representations of elements by $L$, the only thing that matters is their class modulo squares of units. Therefore we will usually work with subsets $S\subset \OO$; however, we will often just write $\alpha\in S$ to mean that the class of $\alpha\in\O_K^+$ lies in $S$. For $S\subset \OO$, we define a lattice to be \textit{$S$-universal} if it represents all the elements of $S$ (i.e., more precisely, if it represents all $\alpha\in\OPlus_K$ such that the class $\alpha\U_K^2\in S$).

If $\alpha\U_K^2=\beta\U_K^2$, then $\NN(\alpha)=\NN(\beta)$, and so we can extend the norm to a well-defined map $\NN:\OO\rightarrow \Z^+$. Also, for $\sigma\in\mathrm{Aut}(K/\Q)$ we define $\sigma:\OO\rightarrow\OO$ as $\sigma(\alpha\U_K^2)=\sigma(\alpha)\U_K^2$.

A crucial tool in studying representations are local representations, which we are now going to discuss briefly. For a totally real number field, there are two types of places: finite places $\p$ corresponding to prime ideals, and infinite places corresponding to embeddings into $\R$. We are going to mostly ignore the latter; it suffices to say (without giving the definitions) that since our lattices are totally positive definite, a nonzero lattice is universal at all infinite places.

For a finite place $\p$, we write $\O_\p$ for the completion of $\O_K$ at $\p$ (the ring of $\p$-adic integers), and $K_\p$ for the corresponding field. One defines $\O_\p$-lattices analogously to $\O_K$-lattices, only omitting the \enquote{positive definite} part of the definition, which makes no sense for the ring $\O_\p$. To localize a lattice, we use the \textit{tensor product} -- for an $\O_K$-lattice $(L=\O_K v_1 + \cdots + \O_K v_{n-1} + \mathfrak{a}v_n,Q)$, it is the $\O_\p$-lattice $(L \otimes \O_\p=\O_\p v_1 + \cdots + \O_\p v_{n-1} + \mathfrak{a}\O_\p v_n,Q')$ where $Q'$ is the quadratic map satisfying $Q'(v_i)=Q(v_i)$ for all $i$. Similarly, for number fields $H \supset K$, we have the tensor product lattice $L \otimes \O_H=\O_H v_1 + \cdots + \O_H v_{n-1} + \mathfrak{a}\O_H v_n$.

We say that a lattice $L$ \textit{represents $\alpha \in \O_K$ at $\p$} if $L \otimes \O_\p$ represents $\alpha$; here, representation for $\O_\p$-lattices is defined analogously as for $\O_K$-lattices. We say that $L$ is \emph{universal at $\p$} if the lattice $L \otimes \O_{\p}$ represents all elements of $\O_{\p}$. We will also need to speak about local $S$-universality where $S$ is a subset of $\O_K$ or of $\OO$, respectively. In that case, we can naturally interpret $S$ as a subset of $\O_{\p}$ or of $\O_{\p} / (\O_{\p}^{\times})^2$, respectively. Thus, we say that $L$ \emph{is $S$-universal at $\p$} if $L \otimes \O_{\p}$ represents all elements of $S$.

For an $\O_\p$-lattice $L$, $\mathfrak{s}(L)$ is the fractional ideal in $\O_\p$ generated by all elements $B(v,w)$, $v,w \in L$. (If $\p$ is non-dyadic, i.e., $\p$ does not divide $2$, then it is an integral ideal.) We say that $L$ is \emph{unimodular} at $\p$ if $\mathfrak{s}(L) = \O_\p$. A crucial result \cite[92:1b]{OM} states that if $\p$ is non-dyadic, $L$ has rank at least 3, and $L$ is unimodular at $\p$, then $L$ is universal at $\p$. Also, every nonzero lattice is unimodular at all but finitely many places: For free lattices, the only problematic places are those that divide the discriminant; generally, one uses the volume ideal of $L$, but we shall not need this explicitly.

\subsection{New notation}

In order to give a unified treatment to some of our results pertaining to classical or diagonal lattices, we proceed as follows: For $X\in\{\nc,\cl,\diag\}$, by an \textit{X-lattice} we mean:
\begin{itemize}
    \item a lattice if $X=\nc$ ($\nc$ here stands for ``non-classical'', i.e., not necessarily classical),
    \item a classical lattice if $X=\cl$,
    \item a diagonal lattice if $X=\diag$.
\end{itemize}

\begin{definition}\label{de:crit}
Let $S \subset \OO$ and $X\in\{\nc,\cl,\diag\}$. We say that $\CC \subset S$ is an \emph{$(S,X)$-criterion set} if for every $X$-lattice $L$ over $K$, we have the following equivalence: $L$ is $S$-universal if and only if it is $\CC$-universal.
\end{definition}

If $S=\OO$, we simply speak about a \emph{universality criterion set}. For all three choices of $X$, note that $S$ itself is an $(S,X)$-criterion set. In Section \ref{se:weaker}, we show that one can in fact remove the condition $\CC \subset S$ from Definition \ref{de:crit} and still obtain virtually the same uniqueness result as in our main theorems.

Throughout most of the paper, we consider a fixed totally real number field $K$ and often also a fixed set $S \subset \OO$ and a fixed choice of $X\in\{\nc,\cl,\diag\}$.

\begin{definition}\label{de:truant}
Given a lattice $L$, we say $\alpha\in S$ is an \emph{$S$-truant} of $L$ if $\alpha \not \to L$ and $\beta\to L$ for all $\beta\in S$, $\NN(\beta)<\NN(\alpha)$. 
\end{definition}

Note that this does not always give a unique $S$-truant of $L$ as an element of $\OO$; for example, if $S = \OO$ then every totally positive unit is an $S$-truant of the zero lattice $\{{0}\}$. 

We introduce a set of numbers which obviously belong to every $(S,X)$-criterion set.

\begin{definition} \label{de:critical}
We say that $\alpha \in S$ is \textit{$(S,X)$-critical} if there exists an $X$-lattice $L$ such that:
 \begin{itemize}
     \item $\alpha \not\to L$;
     \item $L$ is $\bigl(S \setminus \{\alpha\}\bigr)$-universal.
 \end{itemize}
\end{definition}

Clearly, an $(S,X)$-critical element must belong to every $(S,X)$-criterion set. In fact, Definition \ref{de:critical} basically says that an element $\alpha$ is critical if it is possible to find a lattice representing all other elements in $S$, except $\alpha$ (and, of course, its multiples by squares of units). In this way, representing all other elements is not a sufficient condition to represent $\alpha$, and thus $\alpha$ must be included in the criterion set. 

The main aim of the next section is to show that, on the other hand, no non-critical element is needed in a criterion set, therefore providing an effective characterization of criterion sets together with the uniqueness result. 

In order to shorten the terminology, in the case $X=\nc$ we shall usually omit it and talk just about a lattice, $S$-critical element, etc. Similarly, when $S=\OO$, then we omit it, and so we then talk about universal $X$-lattice, $X$-critical element, truant, \dots, or just universal lattice and critical element if also $X=\nc$.

\section{Uniqueness} \label{se:unique}

In this section we prove Theorem \ref{th:unique}, which asserts that the set $\CC_S^X$ of all critical elements forms a criterion set. This means that $\CC_S^X$ is indeed the unique minimal criterion set, as critical elements must obviously belong to every criterion set.

The main ingredient for the proof is the following result, whose idea is that if there exists a lattice representing all elements of norm smaller than a given $\alpha$, but not $\alpha$, then it is possible to
``orthogonally escalate'' the lattice by adding vectors that represent
elements of norm greater than $\alpha$, until eventually we obtain a lattice that represents all of them. 

The proposition is interesting on its own, since it can be used to effectively determine for a given $\alpha \in S$ whether it is $(S,X)$-critical or not (see Remark \ref{rem:34}).

\begin{proposition} \label{pr:critical}
Let $\alpha \in S$. The following are equivalent:
 \begin{enumerate}
     \item[(a)] $\alpha$ is $(S,X)$-critical;
     \item[(b)] there exists an $X$-lattice $L$ for which $\alpha$ is an $S$-truant. 
 \end{enumerate}
\end{proposition}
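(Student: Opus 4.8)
The implication (a) $\Rightarrow$ (b) is immediate: if $\alpha$ is $(S,X)$-critical, witnessed by an $X$-lattice $L$ with $\alpha\not\to L$ that is $\bigl(S\setminus\{\alpha\}\bigr)$-universal, then every $\beta\in S$ with $\NN(\beta)<\NN(\alpha)$ satisfies $\beta\neq\alpha$ in $\OO$ (the norm is well defined on $\OO$, so $\NN(\beta)\neq\NN(\alpha)$ forces $\beta\neq\alpha$), hence lies in $S\setminus\{\alpha\}$ and is represented by $L$. Thus $\alpha$ is an $S$-truant of $L$. All the substance is in (b) $\Rightarrow$ (a), which I would prove as follows. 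Assume $L$ is an $X$-lattice with $S$-truant $\alpha$ and set $N=\NN(\alpha)$, so that $L$ represents all $\beta\in S$ with $\NN(\beta)<N$ but $\alpha\not\to L$. The plan is to orthogonally escalate $L$ to a finite-rank $X$-lattice $L'$ that is $\bigl(S\setminus\{\alpha\}\bigr)$-universal while still failing to represent $\alpha$, using only summands $\qf{\gamma}$ with $\gamma\succ 0$, $\NN(\gamma)\ge N$, and $\gamma\neq\alpha$ in $\OO$.

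The heart of the argument is a norm estimate showing that such escalation can never create a representation of $\alpha$. Suppose $L'=L\perp\qf{\gamma_1}\perp\dots\perp\qf{\gamma_t}$ with each $\NN(\gamma_j)\ge N$ and $\gamma_j\neq\alpha$, and suppose $\alpha=Q(v)+\sum_j\gamma_j c_j^2$ for some $v\in L$ and $c_j\in\O_K$. Since $L$ is totally positive definite, $\sigma_i(\alpha)\ge\sum_j\sigma_i(\gamma_j)\sigma_i(c_j)^2$ for every embedding $\sigma_i$. If two or more $c_j$ are nonzero, then retaining a single index $j_0$ gives the \emph{strict} inequality $\sigma_i(\alpha)>\sigma_i(\gamma_{j_0})\sigma_i(c_{j_0})^2$ for all $i$, whence $N=\NN(\alpha)>\NN(\gamma_{j_0})\NN(c_{j_0})^2\ge N$, a contradiction. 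If exactly one $c_{j_0}\neq 0$, then multiplying the inequalities $\sigma_i(\gamma_{j_0})\sigma_i(c_{j_0})^2\le\sigma_i(\alpha)$ over all $i$ yields $\NN(c_{j_0})^2\le N/\NN(\gamma_{j_0})\le 1$; as $c_{j_0}\neq 0$ this forces $c_{j_0}$ to be a unit and forces equality throughout, i.e. $Q(v)=0$ and $\alpha=\gamma_{j_0}c_{j_0}^2$, contradicting $\gamma_{j_0}\neq\alpha$ in $\OO$. Finally, if all $c_j=0$ then $\alpha=Q(v)$ contradicts $\alpha\not\to L$. Hence $\alpha\not\to L'$ for \emph{any} escalation of the admissible type, and adjoining diagonal summands keeps $L'$ an $X$-lattice in each of the three cases.

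It remains to achieve $\bigl(S\setminus\{\alpha\}\bigr)$-universality with only \emph{finitely many} such summands, and this is the main obstacle, since $S$ may contain infinitely many large-norm elements. Here I would invoke the asymptotic local-global principle of Hsia--Kitaoka--Kneser \cite{HKK}. First adjoin finitely many summands $\qf{\gamma_j}$ (each of norm $\ge N$ and $\neq\alpha$) to produce an $X$-lattice $L_1\supseteq L$ that is universal at every place and of large enough rank: at the finitely many bad finite places one builds a rank-$3$ unimodular direction, which already forces local universality at non-dyadic $\p$ by \cite[92:1b]{OM}, choosing the $\gamma_j$ to be $\p$-adic units in prescribed square classes and of arbitrarily large norm (e.g.\ by scaling a fixed representative by $\mu^2$ for a $\p$-unit $\mu$ of large norm), while at the remaining non-dyadic places $L$ is already unimodular of rank $\ge 3$; the dyadic places are handled similarly with a few more summands of suitably chosen square classes. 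By \cite{HKK}, $L_1$ then represents every totally positive integer all of whose embeddings are sufficiently large; rebalancing an arbitrary class by a totally positive unit square so that all its embeddings are comparable to $\NN(\cdot)^{1/d}$ shows that $L_1$ represents all $\beta\in\OO$ with $\NN(\beta)>B$ for some bound $B$. Since $\{\beta\in\OO:\NN(\beta)\le B\}$ is finite (a standard consequence of Dirichlet's unit theorem and the finiteness of integers of bounded height), only finitely many classes of $S\setminus\{\alpha\}$ are not yet represented; those of norm $<N$ already lie in $L\subseteq L_1$, so the remaining $\gamma$ satisfy $N\le\NN(\gamma)\le B$ and are finite in number. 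Adjoining $\qf{\gamma}$ for each yields the desired finite-rank $X$-lattice $L'$, which is $\bigl(S\setminus\{\alpha\}\bigr)$-universal, while the norm estimate of the previous paragraph guarantees $\alpha\not\to L'$; thus $\alpha$ is $(S,X)$-critical. The delicate technical point I expect to require the most care is ensuring local universality of $L_1$ using only admissible summands --- in particular only diagonal ones when $X=\diag$ --- at the dyadic places, where unimodularity no longer suffices and the square classes of the adjoined $\gamma_j$ must be chosen explicitly.
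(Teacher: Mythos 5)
Your overall strategy is the same as the paper's: escalate $L$ by rank-one summands $\qf{\gamma}$ of norm at least $\NN(\alpha)$ and not equal to $\alpha$ in $\OO$, use the norm inequality $\NN(\alpha)\geq\NN(\omega^2\gamma)\geq\NN(\gamma)\geq\NN(\alpha)$ to show $\alpha$ is never created, reduce to finitely many bad places, and finish with \cite{HKK}. Your norm estimate (done globally, with the two-or-more-nonzero-coefficients case handled by a strict inequality) is correct and equivalent to the paper's inductive version, and your observation that diagonal summands preserve the classes $\cl$ and $\diag$ matches the paper. The one genuine divergence is the local step, and it is exactly the point you flag as delicate. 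You aim for \emph{full} local universality of an intermediate lattice $L_1$, built from summands whose square classes at the bad (in particular dyadic) places are prescribed by CRT; this forces you to control dyadic square classes by hand and is left unexecuted in your write-up. The paper avoids this entirely: it escalates by $(S\setminus\{\alpha\})$-truants, notes that at each bad place $\p_j$ only the finitely many square classes of $K_{\p_j}$ meeting $S\setminus\{\alpha\}$ matter, picks in each such class a representative $\tau_i\in S\setminus\{\alpha\}$ of minimal $\p_j$-valuation (so that $\qf{\tau_i}$ already represents all of $(S\setminus\{\alpha\})\cap t_i$ locally), and then simply waits until the escalation represents these finitely many $\tau_i$ globally --- which it must, since every element of $S\setminus\{\alpha\}$ is eventually represented. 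This yields local $(S\setminus\{\alpha\})$-universality at dyadic and non-dyadic places alike with no case analysis, and local $(S\setminus\{\alpha\})$-universality (rather than full local universality) is all that \cite{HKK} requires here. Your route can be completed --- one summand per square class of $K_\p^\times$ meeting $\O_\p$, chosen of minimal valuation via approximation, does give full local universality even at dyadic $\p$, and such global representatives of norm $\geq\NN(\alpha)$ exist --- but as written that step is a real gap, and adopting the truant-escalation device would let you delete it.
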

\begin{proof}
The implication (a) $\Rightarrow$ (b) is trivial. Indeed, by (a) there exists an $X$-lattice $L$ representing all elements of $S$ except $\alpha$, which in particular represents all elements of $S$ of smaller norm than $\alpha$, i.e., $\alpha$ is an $S$-truant of $L$.
For the other implication, let us first prove it for $X=\nc$ and then explain at the end that the proof remains unchanged when $X=\cl, \diag$.

Thus assume that the lattice $L$ has $S$-truant $\alpha$; we shall construct another lattice $L'$ which proves that $\alpha$ is $S$-critical.

Put $L_0 = L$ and then, as long as $L_i$ is not $(S \setminus \{\alpha\})$-universal, choose $\beta_{i+1}$ to be an $(S \setminus \{\alpha\})$-truant of $L_i$, and put $L_{i+1} = L_i \perp \qf{\beta_{i+1}}$.

First we show by induction that none of the thus produced lattices $L_i$ represent $\alpha$. It is not represented by $L_0$. Assume now that $\alpha$ is not represented by $L_i$; we aim to show that then $\alpha$ is not represented by $L_{i+1}$ either. Since already $L_0$ represented all elements of norm strictly smaller than $\alpha$, the same holds for $L_i$, hence $\NN(\beta_{i+1}) \geq \NN(\alpha)$. Each element represented by $L_{i+1}$ can be written as $\gamma + \omega^2 \beta_{i+1}$ where $\gamma \to L_i$. For the sake of contradiction, assume $\alpha \to L_{i+1}$. Then $\alpha = \gamma + \omega^2 \beta_{i+1}$ where $\omega \neq 0$, as $\alpha \not\to L_i$. Thus
\[
\NN(\alpha)=\NN(\gamma + \omega^2 \beta_{i+1}) \geq \NN(\omega^2 \beta_{i+1}) \geq \NN(\beta_{i+1}) \geq \NN(\alpha);
\]
this means that all the inequalities must in fact be equalities. The first one is an equality if and only if $\gamma = 0$, the second if and only if $\omega$ is a unit. Thus our assumptions $\alpha \to L_{i+1}$ but $\alpha \not\to L_i$ lead to $\alpha \in \U_K^2 \beta_{i+1}$. However, this is a contradiction, since in this case $\alpha$ and $\beta_{i+1}$ are the same element of $\OO$, but we chose $\beta_{i+1}\in S \setminus \{\alpha\}$. 

We showed that none of $L_i$ represent $\alpha$. Thus, if we further show that one of these lattices is $(S \setminus \{\alpha\})$-universal, we have found the desired lattice which proves that condition (a) holds.

To achieve this, first note that every element of $S \setminus \{\alpha\}$ is represented by all but finitely many of the $L_i$. This is an easy consequence of the fact that there are only finitely many elements of $\OO$ under a given norm; indeed, at each step, either the norm of the truants of $L_{i+1}$ is larger than for $L_i$, or $L_{i+1}$ has fewer truants than $L_i$.

Our next aim is proving that from some index $N'$ onwards, the lattices $L_i$, $i \geq N'$, are locally everywhere $(S \setminus \{\alpha\})$-universal. After possibly replacing $L_0$ by $L_3$, we can assume that $L_0$ has rank at least $3$. (It can happen that there is no lattice $L_3$, since our process stops earlier; however, that happens exactly if $L_0$, $L_1$ or $L_2$ is $(S \setminus \{\alpha\})$-universal and there is nothing to prove.)

Denote by $P = \{\p_1, \ldots, \p_k\}$ the finitely many places which are either dyadic or at which $L_0$ is not unimodular. It is well known \cite[92:1b]{OM} that $L_0$ is universal at all places outside of $P$ (as rank of $L_0$ is at least 3). Now, for a place $\p_j$ we want to find an index $N_j$ such that every $L_i$, $i \geq N_j$, is $(S \setminus \{\alpha\})$-universal at $\p_j$. Once such $N_j$ was found, it is enough to put $N' = \max\{N_1, \ldots, N_k\}$.

Fix the place $\p_j$ and denote by $T$ the set of all square classes in the field $K_{\p_j}$ that have nonempty intersection with $S \setminus \{\alpha\}$. If $\p_j$ is non-dyadic, then $\# T \leq 4$ and the classes are represented by some of $1, \epsilon, \pi, \epsilon\pi$ where $\pi$ is a prime and $\epsilon$ a unit in $\O_{\p_j}$; in general, $T = \{t_1, \ldots, t_n\}$ is a finite set. From every class $t_i$, pick a representative $\tau_i \in (S \setminus \{\alpha\}) \cap t_i$ such that no element of $(S \setminus \{\alpha\}) \cap t_i$ has smaller $\p_j$-valuation. Then the lattice $\qf{\tau_i}$ represents at $\p_j$ all elements of $(S \setminus \{\alpha\}) \cap t_i$. Thus, every lattice representing all elements $\tau_1, \ldots, \tau_n$ is $(S \setminus \{\alpha\})$-universal at $\p_j$. We denote the first lattice representing all of them as $L_{N_j}$ and we are done with this part of the proof.

Let us now consider the lattice $L_{N'}$ and assume without loss of generality that it has rank at least $5$. Since it is locally everywhere $(S \setminus \{\alpha\})$-universal, by \cite{HKK} there are only finitely many elements of $(S \setminus \{\alpha\})$ which are not represented by $L_{N'}$. After taking a few more steps, they will all be represented and the lattice $L'=L_{N'+n}$ will be $(S \setminus \{\alpha\})$-universal, as claimed.

Concerning the cases $X=\cl$ and $X=\diag$, note that we have constructed the lattice $L'$ proving $S$-criticality of $\alpha$ as  some lattice $L_i=L\perp\qf{\beta_1,\dots,\beta_i}$. Thus if $L$ is an $X$-lattice, then so is $L'$, and the preceding argument still shows that $L'$ is $(S \setminus \{\alpha\})$-universal, as needed.
\end{proof}

Note that Proposition \ref{pr:critical} does not hold in the setting of $k$-universality, i.e., when we want to represent quadratic  lattices of a given rank $k$. First of all, there is no natural way of ordering the rank $k$ lattices in order to define truants. One could (non-uniquely) choose such an ordering -- but then the proof would fail anyway, for it could definitely happen that when constructing the chain of lattices $L_i$, some of these lattices may represent the ``truant'' $\alpha$ anyway. From the other direction, we know that criterion sets for $k$-universality are not unique for $k\geq 9$ \cite{KLO}. The reason is that in rank 8, there exists the indecomposable unimodular $\Z$-lattice $E_8$. Some lattice of the form $E_8\perp L_0$ must lie in every minimal criterion set (for $k=9$, say), and one can show that it can be replaced by $E_8\perp L$ for any other lattice $L$.

\medskip

We can now easily prove that the set of critical elements is a criterion set. First, let us officially introduce the key notation.

\begin{definition} \label{def:C_S}
Let $K$ be a totally real number field, $S \subset \OO$, and $X\in\{\nc,\cl,\diag\}$.

We denote the set of all $(S,X)$-critical elements as $\CC_S^X$.

When $S=\OO$, we write $\CC_K^X$ instead of $\CC_{\OO}^X$.    

When $X=\nc$, then we write $\CC_S$ or $\CC_K$ instead of $\CC_S^{\nc}$ or $\CC_K^{\nc}$.
\end{definition}

\begin{theorem} \label{th:unique} Let $K$ be a totally real number field, $S \subset \OO$, and $X\in\{\nc,\cl,\diag\}$. Then:

(a) $\CC_S^X$ is an $(S,X)$-criterion set. 

(b) $\CC \subset S$ is an $(S,X)$-criterion set if and only if $\CC_S^X \subset \CC$.

(c) $\Cdiag_S \subset \Ccl_S \subset \CC_S.$

(d) $\CC_S^X$ is finite.
\end{theorem}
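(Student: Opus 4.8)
The plan is to prove Theorem \ref{th:unique} by leveraging Proposition \ref{pr:critical}, which characterizes $(S,X)$-critical elements as precisely those that can arise as $S$-truants of some $X$-lattice. The four parts are closely intertwined, so I would prove them roughly in the order (a), (b), (c), (d), though (d) may need to be folded into the argument for (a).

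For part (a), I must show that any $X$-lattice $L$ that is $\CC_S^X$-universal is already $S$-universal. I would argue by contraposition: suppose $L$ is an $X$-lattice that is \emph{not} $S$-universal. Then the set of elements of $S$ not represented by $L$ is nonempty; since there are only finitely many classes in $\OO$ below any given norm, this set contains an element $\alpha$ of minimal norm. By construction, $L$ represents every $\beta \in S$ with $\NN(\beta) < \NN(\alpha)$, so $\alpha$ is an $S$-truant of $L$. By Proposition \ref{pr:critical}, $(b) \Rightarrow (a)$, this means $\alpha$ is $(S,X)$-critical, i.e., $\alpha \in \CC_S^X$. But $\alpha \not\to L$, so $L$ is not $\CC_S^X$-universal. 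This establishes that $\CC_S^X$-universality implies $S$-universality; the reverse implication is trivial since $\CC_S^X \subset S$. Hence $\CC_S^X$ is an $(S,X)$-criterion set.

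Part (b) is then a clean consequence. Since every $(S,X)$-critical element lies in every $(S,X)$-criterion set (as noted immediately after Definition \ref{de:critical}), any criterion set $\CC$ satisfies $\CC_S^X \subset \CC$. Conversely, if $\CC_S^X \subset \CC \subset S$, then $\CC$-universality implies $\CC_S^X$-universality, which by part (a) implies $S$-universality; the other direction is automatic. For part (c), the inclusions $\Cdiag_S \subset \Ccl_S \subset \CC_S$ follow by observing that every diagonal lattice is classical and every classical lattice is a lattice, so the criticality witness for a diagonal (resp.\ classical) element is automatically a witness in the larger class; thus $\diag$-criticality implies $\cl$-criticality implies $\nc$-criticality. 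I would state this carefully using Definition \ref{de:critical} directly.

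The main obstacle is part (d), the finiteness of $\CC_S^X$, and I expect this to be where the real work lies. The key observation is that the finiteness is essentially already contained in the proof of Proposition \ref{pr:critical}: any sufficiently large rank lattice that is locally everywhere $S$-universal is $S$-universal up to finitely many exceptions by the Hsia--Kitaoka--Kneser theorem \cite{HKK}, and unimodularity at large non-dyadic places forces local universality for rank $\geq 3$. I would argue that critical elements cannot have arbitrarily large norm: fixing a large-rank universal lattice (or a suitable escalation) that is locally everywhere $S$-universal, the finitely many exceptional classes bound the norms at which criticality can occur, while at small norms there are only finitely many classes in $\OO$. The delicate point is handling the dyadic and non-unimodular places uniformly and ensuring the bound on critical norms does not depend on the particular witness lattice; I would address this by fixing one $S$-universal $X$-lattice of large rank in advance and noting that any critical $\alpha$ must be among its finitely many local-to-global discrepancies together with the bounded-norm classes.
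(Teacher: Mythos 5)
Your parts (a), (b), and (c) are correct. For (a) and (b) you follow essentially the same route as the paper (truant of minimal norm plus Proposition \ref{pr:critical}). For (c) you argue directly that a witness lattice for $(S,\diag)$-criticality is also a witness for $(S,\cl)$-criticality, etc.; the paper instead observes that $\CC_S$ is an $(S,\cl)$-criterion set and invokes (b). Both arguments are valid and of comparable length.

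Part (d) is where your proposal has a genuine gap. The paper does not prove finiteness from scratch: it cites Chan--Oh \cite{CO}, Theorem 5.7, for the existence of \emph{some} finite $(S,X)$-criterion set $S_0$, and then concludes $\CC_S^X \subset S_0$ by (b). You instead sketch a direct argument via \cite{HKK}, and the sketch does not close. The problem is exactly the uniformity issue you name but do not resolve: a critical element $\alpha$ is an $S$-truant of some witness lattice $L$ that varies with $\alpha$, and the set of local-to-global exceptions coming from \cite{HKK} depends on $L$ (through its volume/discriminant and its behaviour at the dyadic and non-unimodular places), so there is no bound on $\NN(\alpha)$ that works for all witnesses simultaneously. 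Your proposed fix --- fixing one $S$-universal $X$-lattice $M$ of large rank in advance and claiming every critical $\alpha$ lies among ``its finitely many local-to-global discrepancies'' --- is vacuous: an $S$-universal $M$ has no discrepancies within $S$ at all, and in any case the representation behaviour of a single fixed lattice says nothing about which elements can be truants of \emph{other} lattices; that is precisely what makes the finiteness of criterion sets a nontrivial theorem. To repair (d) you should either quote the Chan--Oh finiteness result as the paper does, or reproduce an argument of escalation type that bounds the family of relevant witness lattices (not just one of them) before applying \cite{HKK} to each member of that finite family.
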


\begin{proof}
(a) Let $L$ be an $X$-lattice which is $\CC_S^X$-universal. If it is not $S$-universal, there is an $S$-truant $\alpha$ of $L$. However, by Proposition \ref{pr:critical}, $\alpha \in \CC_S^X$ and thus $\alpha \to L$, which is a contradiction. Hence $L$ is $S$-universal, and thus $\CC_S^X$ is indeed an $(S,X)$-criterion set.

(b) Clearly a set containing $\CC_S^X$ is also an $(S,X)$-criterion set; on the other hand, we already observed that an $(S,X)$-criterion set must necessarily contain all $(S,X)$-critical elements.

(c) $\CC_S$ is also an $(S,\cl)$-criterion set, and so $\Ccl_S \subset \CC_S$ by (b). The first inclusion $\Cdiag_S \subset \Ccl_S$  follows in the same way.

(d) Chan--Oh \cite[Theorem 5.7]{CO} showed that there is a finite $S$-criterion set $S_0$, and so our minimal $S$-criterion set $\CC_S$ is finite, as it is contained in $S_0$ by (b). The inclusions in (c) then imply also the finiteness of $\Cdiag_S$ and $\Ccl_S$.
\end{proof}

Hence we now know that, \emph{for all subsets $S$ of $\OO$, there is a unique minimal $S$-criterion set which is finite}. 

\begin{remark}\label{rem:34} As there are finitely many elements of $\OO$ of bounded norm,
    checking whether an element $\alpha$ is $(S,X)$-critical is a finite procedure: One needs to check the representability of the finitely many elements of smaller norm by the finitely many lattices that may be relevant for the criticality of $\alpha$ (both can be best seen using escalations as explained in the follow-up paper \cite{KR}; or one can use bounds such as \cite[Lemma 4]{EK}). In principle, one can thus (provably!) find all the elements of $\CC_S^X$ whose norms are smaller than a given bound.
\end{remark}

Let us now note several consequences in the key case where $S$ contains all totally positive integers. Therefore, until the end of the section, let us consider $S=\OO$.

\begin{corollary} \label{co:closed}
The set $\CC_K^X$ is closed both under conjugation, i.e., for each $\sigma\in\mathrm{Aut}(K/\Q)$ we have $\sigma(\CC_K^X)=\CC_K^X$, and under multiplication by totally positive units.
\end{corollary}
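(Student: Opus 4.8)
The plan is to exploit the characterization of critical elements from Proposition~\ref{pr:critical}, which says that $\alpha\in\CC_K^X$ if and only if there exists an $X$-lattice $L$ having $\alpha$ as an $\OO$-truant. Since both conjugation and multiplication by a totally positive unit are symmetries that preserve all the relevant structure (the norm, the set $\OO$, representability, and the class of $X$-lattices), the strategy is to show that applying either symmetry to a witnessing lattice $L$ for $\alpha$ produces a witnessing lattice for the image of $\alpha$. I would handle the two symmetries separately but in parallel.

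First I would treat multiplication by a totally positive unit $\epsilon\in\UPlus_K$. The key observation is that scaling the quadratic map sends one $X$-lattice to another: if $(L,Q)$ is an $X$-lattice, then $(L,\epsilon Q)$ is again an $X$-lattice of the same class $X$ (scaling by a totally positive unit preserves total positive-definiteness, integrality, classicality, and diagonality). Moreover $Q$ represents $\gamma$ if and only if $\epsilon Q$ represents $\epsilon\gamma$, and $\NN(\epsilon\gamma)=\NN(\gamma)$ since $\epsilon$ is a unit. Thus if $\alpha$ is an $\OO$-truant of $(L,Q)$, meaning $\alpha\not\to(L,Q)$ while every $\beta\in\OO$ with $\NN(\beta)<\NN(\alpha)$ is represented, then $\epsilon\alpha\not\to(L,\epsilon Q)$, and every $\beta'$ with $\NN(\beta')<\NN(\epsilon\alpha)=\NN(\alpha)$ can be written as $\epsilon\gamma$ with $\NN(\gamma)<\NN(\alpha)$, hence $\beta'\to(L,\epsilon Q)$. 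So $\epsilon\alpha$ is an $\OO$-truant of $(L,\epsilon Q)$, giving $\epsilon\alpha\in\CC_K^X$ by Proposition~\ref{pr:critical}. This proves $\epsilon\cdot\CC_K^X\subset\CC_K^X$, and applying the same to $\epsilon^{-1}$ yields equality.

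For conjugation by $\sigma\in\mathrm{Aut}(K/\Q)$, the analogous move is to apply $\sigma$ to the lattice: given $(L,Q)$, let $\sigma(L)$ be the lattice obtained by transporting the $\O_K$-module structure and the quadratic map through $\sigma$, so that $\sigma(L)$ represents $\sigma(\gamma)$ exactly when $L$ represents $\gamma$. Because $\sigma$ permutes the real embeddings, it preserves total positivity, sends $\O_K$ to $\O_K$, and preserves integrality and classicality of the bilinear form, so $\sigma(L)$ is again an $X$-lattice of the same type; and $\NN(\sigma(\gamma))=\NN(\gamma)$ since the norm is Galois-invariant. Repeating the truant argument verbatim with $\sigma$ in place of multiplication by $\epsilon$ shows that $\sigma(\alpha)$ is an $\OO$-truant of $\sigma(L)$ whenever $\alpha$ is an $\OO$-truant of $L$, whence $\sigma(\CC_K^X)\subset\CC_K^X$; applying $\sigma^{-1}$ gives equality.

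The routine verifications are the module-theoretic ones, so the step I expect to require the most care is confirming that both operations genuinely preserve each flavor of $X$-lattice and that the truant condition transports cleanly. In particular, for $X=\diag$ one must check that the constructions keep the form diagonal (scaling a diagonal form by $\epsilon$ stays diagonal, and $\sigma$ applied to $\qf{a_1,\dots,a_n}$ gives $\qf{\sigma(a_1),\dots,\sigma(a_n)}$, again diagonal), and for $X=\cl$ that the bilinear form stays $\O_K$-valued; these are straightforward but should be stated explicitly. The only genuine subtlety is making sure that "representability" and the norm are manifestly invariant under the symmetry used, which is exactly what makes the truant argument from Proposition~\ref{pr:critical} go through unchanged.
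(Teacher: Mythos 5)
Your proof is correct and follows essentially the same idea as the paper: transport the witness lattice through the symmetry, using that scaling by $\epsilon\in\UPlus_K$ and conjugation by $\sigma$ preserve the class of $X$-lattices, representability, and norms. The only cosmetic difference is that the paper verifies $(S\setminus\{\alpha\})$-universality of the transported lattice directly from Definition~\ref{de:critical}, whereas you route the argument through the truant characterization of Proposition~\ref{pr:critical}; both are fine.
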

\begin{proof}
Let $\sigma\in \mathrm{Aut}(K/\Q)$ and $\epsilon \in \UPlus_K$. In fact, we see something more general: Whenever $(L,Q)$ is $\bigl(S \setminus \{\alpha\}\bigr)$-universal for some $S$, then $(L,\epsilon Q)$ is $\bigl(\epsilon S \setminus \{\epsilon\alpha\}\bigr)$-universal. Similarly, the lattice $\bigl(L,\sigma(Q)\bigr)$, where the map $\sigma(Q)$ is defined by $v \mapsto \sigma\bigl(Q(v)\bigr)$, is $\bigl(\sigma(S) \setminus \{\sigma(\alpha)\}\bigr)$-universal. Thus the result holds since for $S = \OO$ we have $\sigma(S)=S=\epsilon S$.
\end{proof}

It is important to stress that the statement $\sigma(\CC_K^X) = \CC_K^X$ really requires $\CC_K^X$ to be understood as a subset of $\OO$. For example, it is imprecise to formulate Lee's theorem \cite{Le} as $\Ccl_{\Q(\sqrt{5})} = \{1,2,\frac{5+\sqrt5}{2}, \frac{7+\sqrt5}{2}, \frac{7-\sqrt5}{2}, 2\frac{5+\sqrt5}{2}, 3\frac{5+\sqrt5}{2}\}$. Normally, this sloppiness does not cause any confusion, but here one must see that Corollary \ref{co:closed} is not violated since $\frac{5-\sqrt5}{2} \in \frac{5+\sqrt5}{2} \U_K^2$.

Since real quadratic fields with totally positive fundamental unit have density one (when ordered by discriminant; for much more information on this topic see, e.g., \cite{St}), Corollary \ref{co:closed} has a funny consequence.

\begin{corollary}\label{co:density}
For density $1$ of real quadratic fields $K$, $\#\CC_K,\#\Ccl_K$, and $\#\Cdiag_K$ are all even.
\end{corollary}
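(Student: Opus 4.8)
The plan is to combine Corollary \ref{co:closed} with the density statement about totally positive fundamental units. The key idea is that for a real quadratic field $K$ whose fundamental unit $\epsilon_0$ is totally positive, multiplication by $\epsilon_0$ acts as a fixed-point-free involution-like operation on the criterion set, forcing even cardinality; I would make this precise by exhibiting a free $\Z/2\Z$-action.

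\medskip

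First I would fix a real quadratic field $K=\Q(\sqrt D)$ with totally positive fundamental unit $\epsilon_0 \in \UPlus_K$, and work with any one of the three sets $\CC = \CC_K, \Ccl_K, \Cdiag_K$. By Corollary \ref{co:closed}, $\CC$ is closed under multiplication by totally positive units, so in particular $\epsilon_0 \cdot \CC = \CC$; thus multiplication by $\epsilon_0$ gives a bijection from the finite set $\CC$ to itself. Since $\CC \subset \OO$ consists of classes modulo $\U_K^2$, and since $\epsilon_0$ is not a square in $\U_K$ (it is the fundamental unit), the map $\alpha\U_K^2 \mapsto \epsilon_0\alpha\U_K^2$ has no fixed point: if $\epsilon_0\alpha\U_K^2 = \alpha\U_K^2$ then $\epsilon_0 \in \U_K^2$, a contradiction. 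Moreover this map is an involution on $\OO$, because $\epsilon_0^2 \in \U_K^2$ means $\epsilon_0^2\alpha\U_K^2 = \alpha\U_K^2$. Hence multiplication by $\epsilon_0$ partitions $\CC$ into orbits of size exactly $2$, which forces $\#\CC$ to be even.

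\medskip

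Second I would invoke the density-one statement, cited in the sentence preceding the corollary (see \cite{St}): when real quadratic fields are ordered by discriminant, those whose fundamental unit is totally positive have natural density $1$. For every such field the preceding paragraph applies simultaneously to all three of $\CC_K$, $\Ccl_K$, $\Cdiag_K$, since Corollary \ref{co:closed} covers all choices $X \in \{\nc, \cl, \diag\}$. Therefore each of $\#\CC_K$, $\#\Ccl_K$, $\#\Cdiag_K$ is even for a density-$1$ set of real quadratic fields, which is exactly the claim.

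\medskip

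I do not expect any serious obstacle here; the statement is explicitly flagged as a ``funny consequence'' and the argument is short. The only points requiring care are conceptual rather than computational: one must genuinely work inside $\OO$ rather than with representatives in $\OPlus_K$ (otherwise ``multiplication by $\epsilon_0$'' is not an involution and the fixed-point-free claim is false), and one must correctly use that in a \emph{real quadratic} field the full unit group modulo squares is $\{1, \epsilon_0\}\U_K^2$ up to sign, so that $\epsilon_0$ being totally positive is precisely what guarantees $\epsilon_0 \in \UPlus_K$ and that multiplication by it preserves $\OO = \OPlus_K/\U_K^2$. The density input is imported as a black box from the literature, so the mild subtlety is just matching the orbit-counting argument to the right ambient set.
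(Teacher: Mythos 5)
Your proof is correct and is precisely the argument the paper intends (the paper states Corollary \ref{co:density} as an immediate consequence of Corollary \ref{co:closed} without writing out the details): closure of $\CC_K^X$ under multiplication by the totally positive fundamental unit $\epsilon_0$, together with the fact that $\alpha\U_K^2\mapsto\epsilon_0\alpha\U_K^2$ is a fixed-point-free involution on $\OO$ because $\epsilon_0\notin\U_K^2$ while $\epsilon_0^2\in\U_K^2$, partitions the finite criterion set into orbits of size $2$. Your attention to working in $\OO$ rather than with representatives in $\OPlus_K$ is exactly the point the paper emphasizes after Corollary \ref{co:closed}.
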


We also obtain the following cute corollary.

\begin{corollary}\label{pr:quadratic}
(a) Denote by $\mdiag_K$ the minimal rank of a universal diagonal form. Then 
$$\mdiag_K\leq \#\Cdiag_K$$

(b) There are only finitely many real quadratic fields $K$ where $\#\Cdiag_K \leq 7$.
\end{corollary}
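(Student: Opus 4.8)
The plan is to prove Corollary \ref{pr:quadratic} in two parts, using the machinery already established.

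\medskip

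\textbf{Part (a).} First I would observe that there is a natural universal diagonal lattice whose rank equals $\#\Cdiag_K$. The key idea is that the criterion set $\Cdiag_K$ itself furnishes the coefficients of a universal diagonal form. Concretely, write $\Cdiag_K = \{\alpha_1, \ldots, \alpha_r\}$ (choosing totally positive integer representatives for each class in $\OO$), where $r = \#\Cdiag_K$, and consider the diagonal lattice $L = \qf{\alpha_1, \alpha_2, \ldots, \alpha_r}$. By construction, $L$ represents each $\alpha_i$ (taking the $i$-th standard basis vector gives $Q = \alpha_i$), so $L$ is $\Cdiag_K$-universal. Since $\Cdiag_K$ is a $(\OO, \diag)$-criterion set by Theorem \ref{th:unique}(a), any diagonal lattice that is $\Cdiag_K$-universal is automatically universal. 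Hence $L$ is a universal diagonal lattice of rank $r = \#\Cdiag_K$. As $\mdiag_K$ is by definition the minimal rank among all universal diagonal forms, we conclude $\mdiag_K \leq \#\Cdiag_K$.

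\medskip

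\textbf{Part (b).} For the second part I would leverage the inequality from (a) together with the known lower bounds on minimal ranks of universal diagonal forms over real quadratic fields. The strategy is to reduce the claim $\#\Cdiag_K \leq 7 \Rightarrow K$ lies in a finite set to a statement purely about $\mdiag_K$. Indeed, if $\#\Cdiag_K \leq 7$, then by part (a) we have $\mdiag_K \leq 7$ as well. Now I would invoke the result of Kim--Kim--Park \cite{KKP} (as referenced in the introduction): over real quadratic fields, universal ternary lattices exist only for finitely many fields, and more generally the minimal ranks grow, so that only finitely many real quadratic fields $K$ satisfy $\mdiag_K \leq 7$. Combining this with the implication just established yields that only finitely many real quadratic fields have $\#\Cdiag_K \leq 7$.

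\medskip

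The main obstacle I anticipate is ensuring that the cited bound from \cite{KKP} applies to the \emph{diagonal} minimal rank $\mdiag_K$ rather than only to the non-classical minimal rank $m_K$ or the classical one. Since $\mdiag_K \geq m_K$ (every diagonal universal form is in particular a universal lattice), a lower bound of the form ``$m_K \leq 7$ for only finitely many $K$'' would immediately transfer; the delicate point is checking that the threshold $7$ is consistent with whatever explicit bound \cite{KKP} provides for classical ternaries (which exist only for $D = 2, 3, 5$) and its extension to larger ranks. I would therefore carefully trace the precise form of the finiteness statement being quoted, confirming that the bound controls $\mdiag_K$ up to the value $7$, and if necessary restate the reduction in terms of $m_K \leq \mdiag_K$ so that the cleanest available lower bound can be applied directly.
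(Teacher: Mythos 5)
Your proposal is correct and follows essentially the same route as the paper: part (a) builds the universal diagonal lattice $\qf{\alpha_1,\ldots,\alpha_r}$ directly from $\Cdiag_K$ using Theorem \ref{th:unique}(a), and part (b) combines this with the Kim--Kim--Park finiteness result via the chain $m_K\leq \mdiag_K\leq \#\Cdiag_K$. Your closing worry about whether the cited bound controls $\mdiag_K$ is resolved exactly as you suggest (the result in \cite{KKP} bounds $m_K$ for arbitrary lattices, so it transfers through $m_K\leq\mdiag_K$), which is also how the paper uses it.
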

\begin{proof}
(a) If $\Cdiag_K = \{\alpha_1,\ldots,\alpha_n\}$, then clearly the diagonal form $\qf{\alpha_1, \ldots, \alpha_n}$ is universal.

(b) This is obvious from part (a), as Kim--Kim--Park \cite{KKP} showed that there are only finitely many real quadratic fields admitting a universal quadratic lattice in less than $8$ variables. 
\end{proof}

Note that the diagonal statement in part (a) above is stronger than, e.g., the analogous observation that $m_K \leq \#\CC_K$, for we clearly have $m_K\leq \mdiag_K\leq \#\Cdiag_K\leq \#\CC_K$.

\section{Indecomposable elements, rational integers} \label{se:indec}

Let us continue focusing on the case $S=\OO$ of universality criterion sets and produce some elements that must be contained in them. 
By Theorem \ref{th:unique}(c), we have $\Cdiag_S \subset \Ccl_S \subset \CC_S$, and so we will usually work with $\Cdiag_K$, as this will imply the same results for the larger criterion sets.  This section analyzes specific elements of $\OPlus_K$ for which we can determine that they must lie in the minimal criterion set. Further down in the text this will be studied for rational integers, but first we show that indecomposable elements of $\O_K$ mostly belong to the minimal criterion set. Also, for quadratic fields $K$, we use the more precise knowledge of the additive structure of $\OPlus_K$ to explicitly find a quadratic form representing everything but a given squarefree indecomposable element.

We say that $\gamma \in \O_K$ (or $\gamma\in\OO$) is \emph{squarefree} if it is not divisible by any nontrivial square in the sense that $\omega^2 \mid \gamma$ only holds for $\omega \in \U_K$. Note that this is a condition about divisibility by elements, not by ideals. We start with a simple observation which generalizes the fact that the minimal criterion set over $\Z$ contains only squarefree integers.

\begin{lemma} \label{ob:squarefree}
The criterion sets $\Cdiag_K\subset  \Ccl_K\subset \CC_K$ only contain squarefree elements.
\end{lemma}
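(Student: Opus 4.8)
The plan is to combine the identification of the minimal criterion set with the set of critical elements (Theorem~\ref{th:unique}) and the truant reformulation of criticality (Proposition~\ref{pr:critical}), exploiting the fact that representability is preserved under multiplication by a square. Since Theorem~\ref{th:unique}(c) gives $\Cdiag_K\subset\Ccl_K\subset\CC_K$, it is enough to prove the claim for the largest set $\CC_K=\CC_K^{\nc}$: squarefreeness of its elements is then automatically inherited by the two subsets. So I will argue that a non-squarefree element of $\OO$ cannot be critical.

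Suppose, for contradiction, that $\alpha\in\CC_K$ is not squarefree, and write $\alpha=\omega^2\beta$ with $\omega\in\O_K$ a non-unit and $\beta\in\O_K$. Two routine preliminary observations are needed. First, $\beta$ is again a totally positive integer: as $\alpha\succ 0$ and $\sigma_i(\omega)^2>0$ for every embedding (note $\omega\neq 0$, else $\alpha=0$), we get $\sigma_i(\beta)=\sigma_i(\alpha)/\sigma_i(\omega)^2>0$, so $\beta\in\OPlus_K$ and hence its class lies in $S=\OO$. Second, since $\omega$ is a non-unit algebraic integer we have $|\NN(\omega)|\geq 2$, so $\NN(\beta)=\NN(\alpha)/\NN(\omega)^2<\NN(\alpha)$.

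The decisive step is then immediate. By Theorem~\ref{th:unique}, $\alpha\in\CC_K$ means $\alpha$ is critical, so Proposition~\ref{pr:critical} furnishes a lattice $L$ having $\alpha$ as an $S$-truant; in particular $\alpha\not\to L$, while every element of $S$ of norm strictly less than $\NN(\alpha)$ is represented by $L$. Applying this to $\beta$ yields $v\in L$ with $Q(v)=\beta$; but then $\omega v\in L$ and $Q(\omega v)=\omega^2Q(v)=\omega^2\beta=\alpha$, so $\alpha\to L$, contradicting $\alpha\not\to L$. Hence no non-squarefree element is critical, and $\CC_K$ contains only squarefree elements.

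I do not anticipate any real obstacle: the argument rests only on the norm-decreasing factorization $\alpha=\omega^2\beta$ and on the homogeneity $Q(\omega v)=\omega^2Q(v)$, which holds for any $\O_K$-lattice. The only points needing (easy) care are checking that $\beta$ stays totally positive, so that it truly belongs to $S$ and can be fed into the truant condition, and that the inequality $\NN(\beta)<\NN(\alpha)$ is strict, which is exactly where the hypothesis that $\omega$ is a non-unit enters. Since the reasoning uses nothing specific to the type $X$, it applies verbatim to $\Ccl_K$ and $\Cdiag_K$ as well, so one could avoid even invoking the inclusions of Theorem~\ref{th:unique}(c).
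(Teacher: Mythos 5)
Your proof is correct and rests on the same single idea as the paper's: if $\alpha=\omega^2\beta$ with $\omega$ a non-unit, then any lattice representing $\beta$ also represents $\alpha$ via $Q(\omega v)=\omega^2 Q(v)$. The only (immaterial) difference is that you detour through Proposition~\ref{pr:critical} and the strict norm inequality $\NN(\beta)<\NN(\alpha)$ to see that $\beta$ is represented, whereas the paper argues directly from the definition of criticality, noting that $\beta$ lies in a square class different from $\alpha$ and hence in $S\setminus\{\alpha\}$.
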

\begin{proof}
If a critical element satisfies $\beta = \alpha \omega^2$ for $\alpha,\omega \in \O_K$, then $\alpha \succ 0$, and clearly every lattice representing $\alpha \in \OPlus_K$ represents $\beta$ as well. This is a contradiction unless $\omega$ is a unit.
\end{proof}

A simple yet powerful theorem is that for indecomposable elements, this is in fact an equivalence.

\begin{theorem} \label{th:indecomposable}
Let $\beta \in \OPlus_K$ be a squarefree indecomposable element. Then $\beta \in \Cdiag_K$. 
\end{theorem}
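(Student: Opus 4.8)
By Theorem~\ref{th:unique}(c), it suffices to prove $\beta \in \Cdiag_K$, since this immediately gives $\beta\in\Ccl_K\subset\CC_K$. By Proposition~\ref{pr:critical}, proving that $\beta$ is $(\OO,\diag)$-critical is equivalent to exhibiting a \emph{single} diagonal lattice $L$ having $\beta$ as an $\OO$-truant, i.e.\ a diagonal lattice that represents every totally positive integer of norm strictly smaller than $\NN(\beta)$ but does not represent $\beta$ itself. This is the reformulation I would exploit throughout: I do not need to construct a lattice representing \emph{everything} except $\beta$, only one for which $\beta$ is the minimal-norm non-represented element; the ``orthogonal escalation'' machinery of Proposition~\ref{pr:critical} then automatically upgrades it to a witness of criticality.

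The plan is to build $L$ by a greedy/escalation process adapted to avoid representing $\beta$. Start with $L_0 = \{0\}$ and repeatedly adjoin orthogonal summands $\qf{\delta}$ where $\delta$ is chosen among totally positive integers of norm $< \NN(\beta)$ that are not yet represented, continuing until every such element is represented. I must argue this terminates: there are only finitely many classes in $\OO$ of norm below $\NN(\beta)$, and each escalation step either represents a previously missing one or cannot decrease the stock, so after finitely many steps we obtain a diagonal lattice $L$ representing all totally positive integers of norm $<\NN(\beta)$. The crucial point is to show this $L$ can be arranged so that $\beta\not\to L$. Here is where both \emph{squarefree} and \emph{indecomposable} enter. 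Any value of a diagonal lattice $\qf{\delta_1,\dots,\delta_n}$ has the form $\sum_j \omega_j^2 \delta_j$ with $\omega_j\in\O_K$; if $\beta=\sum_j\omega_j^2\delta_j$ with each $\delta_j$ of norm $<\NN(\beta)$ (hence $\delta_j \neq \beta$ in $\OO$), then since $\beta$ is \emph{indecomposable} it cannot be a sum of two or more totally positive integers, forcing all but one $\omega_j$ to vanish, i.e.\ $\beta=\omega^2\delta$ for a single summand $\delta$; and then $\NN(\beta)=\NN(\omega)^2\NN(\delta)$ with $\NN(\delta)<\NN(\beta)$ forces $\omega$ to be a nonunit, contradicting that $\beta$ is \emph{squarefree}. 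Thus no such $L$ can represent $\beta$, so $\beta$ is automatically an $\OO$-truant of the lattice produced.

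Let me restate the core argument more carefully, since it is really the whole proof. Suppose $L=\qf{\delta_1,\dots,\delta_n}$ with every $\delta_i$ a totally positive integer of norm $<\NN(\beta)$, and suppose for contradiction $\beta\to L$, say $\beta=\sum_{i} \omega_i^2\delta_i$ with $\omega_i\in\O_K$. Each nonzero summand $\omega_i^2\delta_i$ is totally positive, so by indecomposability of $\beta$ exactly one summand is nonzero; write $\beta=\omega^2\delta$ with $\delta$ one of the $\delta_i$. Taking norms, $\NN(\beta)=\NN(\omega)^2\NN(\delta)$, and since $\NN(\delta)<\NN(\beta)$ we get $\NN(\omega)^2>1$, so $\omega$ is not a unit; but then $\omega^2\mid\beta$ nontrivially, contradicting squarefreeness of $\beta$ (Lemma~\ref{ob:squarefree} records exactly this phenomenon). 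Hence $\beta\not\to L$. Combined with the fact that $L$ represents all totally positive integers of smaller norm, $\beta$ is an $\OO$-truant of $L$, so by Proposition~\ref{pr:critical} $\beta$ is $(\OO,\diag)$-critical, i.e.\ $\beta\in\Cdiag_K$.

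I expect the only genuine subtlety to be the \emph{existence and termination} of the escalation producing a diagonal lattice $L$ that represents \emph{all} totally positive integers of norm $<\NN(\beta)$: one must know that each such class can actually be picked up by adjoining one diagonal summand (trivially, $\qf{\delta}$ represents $\delta$), and that the process halts — which follows from finiteness of $\OO$ below a fixed norm bound, exactly as in the termination argument inside the proof of Proposition~\ref{pr:critical}. The arithmetic heart of the theorem, namely why $\beta$ escapes representation, is the short indecomposable-plus-squarefree argument above and is the main conceptual step; everything else is bookkeeping already licensed by the earlier results in the paper.
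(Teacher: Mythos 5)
Your proposal is correct and follows essentially the same route as the paper: reduce via Proposition~\ref{pr:critical} to exhibiting a diagonal lattice with $S$-truant $\beta$, take the lattice $\qf{\alpha_1,\dots,\alpha_k}$ built from representatives of all classes of norm below $\NN(\beta)$, and use indecomposability to force any representation of $\beta$ into a single summand $\omega^2\delta$, which squarefreeness then rules out. The paper simply takes all such representatives at once rather than running your greedy escalation, so your termination worry is unnecessary, but the argument is the same.
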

\begin{proof}

By Theorem \ref{th:unique}, we have to show that $\beta$ is critical, and by Proposition \ref{pr:critical} it is enough to find a diagonal lattice $L$ which represents every $\alpha \in \OPlus_K$ for which $\NN(\alpha) < \NN(\beta)$. There are only finitely many classes of such numbers in $\OO$; denote their representatives as $\alpha_1, \ldots, \alpha_k$. Then $L=\qf{\alpha_1, \ldots, \alpha_k}$ satisfies the assumptions: Thanks to indecomposability, $\beta \to L$ would mean $\beta \to \qf{\alpha_i}$ for some $i$, which is equivalent to $\beta = \alpha_i \omega^2$ for some $\omega$. This is a contradiction, so $L$ indeed has truant $\beta$.
\end{proof}

In light of Theorem \ref{th:indecomposable}, every squarefree indecomposable element $\beta$ is critical and there exists a lattice $L$ representing everything except $\beta$. However, in practice it is difficult to exhibit such a lattice and prove that it has this property. The reason is that the constructions based on Proposition \ref{pr:critical} rely on the asymptotic results of \cite{HKK}, which give notoriously large upper bounds. For quadratic fields, we provide an alternative construction which directly yields such a lattice $L$ for every squarefree indecomposable element.

\begin{example} \label{ex:explicitForm}
\emph{Let $K$ be a real quadratic field and $\beta\in\OPlus_K$ a squarefree indecomposable element. We shall construct explicitly a diagonal quadratic form over $\O_K$ which represents all totally positive integers except for $\beta\epsilon^2$, $\epsilon \in \U_K$.}

In accordance with \cite{HK}, we order the indecomposables by their absolute value to get the bi-infinite sequence $\ldots, \beta_{-1}, \beta_0=1, \beta_1, \ldots$ where $\beta_i < \beta_{i+1}$ for all indices, and denote by $t$ the index for which $\beta_t$ is the square of the fundamental unit. We have $\beta = \beta_{k+nt} = \epsilon_{\mathrm{fund}}^{2n}\beta_k$ for a unique $0 \leq k \leq t-1$ and $n\in\Z$. We claim that (unless $t=1$, which only happens for $\Q(\sqrt5)$ and has to be handled separately) the diagonal form $D$ defined as
\[
D = {}^{\beta_0\!}I_4 \perp \ldots \perp {}^{\beta_{k-1}\!}I_4 \perp {}^{\beta_{k+1}\!}I_4 \perp \ldots \perp {}^{\beta_{t-1}\!}I_4 \perp {}^{\beta_k\!}J \perp \qf{\beta_{k-1}+\beta_k} \perp \qf{\beta_k+\beta_{k+1}},
\]
has the desired property; here $J =\qf{2,2,3,4}$ denotes a form which over $\Z$ represents all natural numbers except for $1$, and $I_4 =\qf{1,1,1,1}$, and the upper left index denotes scaling of the quadratic form, e.g., ${}^{\beta_0\!}I_4 = \qf{\beta_0,\beta_0,\beta_0,\beta_0}$ and ${}^{\beta_k\!}J=\qf{2\beta_k,2\beta_k,3\beta_k,4\beta_k}$.

First of all, we observe that $D$ does not represent $\beta_k$. As $\beta_k$ is indecomposable, it is enough to prove that it is not represented by each of the orthogonal summands. The summands $\qf{2\beta_k}$, $\qf{3\beta_k}$, $\qf{4\beta_k}$, $\qf{\beta_{k-1}+\beta_k}$, and $\qf{\beta_k+\beta_{k+1}}$ do not represent any indecomposable elements whatsoever. If $\beta_k \to \qf{\beta_i}$, then by the squarefree property we have $\beta_k = \epsilon_{\mathrm{fund}}^{2m}\beta_i$ for some $m$, but this is impossible for $i \neq k$, $0 \leq i \leq t-1$. Thus $\beta_k$ is not represented by $D$, and neither is $\beta$.

Assume now that $\alpha \in \OPlus_K$ and $\alpha \not\in \beta_k\U^2_K$. We shall prove that it is represented by $D$. By \cite[Theorem~2]{HK}, one can write $\alpha = m\beta_j + n\beta_{j+1}$ for nonnegative $m,n \in \Z$. If necessary, multiply $\alpha$ by a suitable power of $\epsilon_{\mathrm{fund}}^2$ so that $0 \leq j \leq t-1$. Since $I_4$ represents every natural number, then $\alpha \to D$ unless $k = j$ or $j+1$. Thanks to the summand ${}^{\beta_k\!}J$, the case when the coefficient in front of $\beta_k$ is greater than $1$ is also trivial. Finally, if $\alpha = \beta_k + m'\beta_{k\pm1}$, we can make use of the summand $\qf{\beta_k+\beta_{k\pm1}}$ (unless $m'=0$, which is exactly the case $\alpha = \beta_k$ when we claim $\alpha \not\to D$).

When $K = \Q(\sqrt5)$, there is only one indecomposable element, namely $1$, so $\beta_i = \varphi^{2i}$ where the golden ratio $\varphi = \frac{1+\sqrt{5}}{2}$ is the fundamental unit. Up to multiplication by $\U_K^2$, every element can be written as $m + n\varphi^2$. The lattice which represents everything but $1$ can for example be constructed as $D = J \perp J \perp \qf{1+\varphi^2} \perp \qf{2+\varphi^2} \perp \qf{1+2\varphi^2}$; there are also nicer but slightly less elementary choices.
\end{example}

Now we shall switch our attention from indecomposable elements to rational integers. Note that the first claim of the following statement is clear from the preceding discussion, as $1$ is always a squarefree indecomposable element.

\begin{proposition} \label{pr:123}
Let $K$ be a totally real number field. Then:
 \begin{enumerate}
 \item[(a)] $1 \in \Cdiag_K$;
 \item[(b)] $2 \in \CC_K$ if and only if $2 \in \Cdiag_K$, which happens if and only if $2$ is squarefree in $K$;
 \item[(c)] if both $2$ and $3$ are squarefree in $K$ and $\sqrt5 \notin K$, then $3 \in \Cdiag_K$.
 \end{enumerate}
\end{proposition}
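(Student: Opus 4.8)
The plan is to establish criticality of $3$ directly via Proposition \ref{pr:critical}: since we are in the case $S=\OO$, it suffices to produce a single diagonal lattice $L$ having $3$ as a truant, i.e. one that represents every totally positive integer of norm $<\NN(3)=3^{d}$ (where $d=[K:\Q]$) but does not represent $3$ itself; then $3\in\Cdiag_K$ follows from Theorem \ref{th:unique}. For the candidate I would take representatives $\gamma_1,\dots,\gamma_r$ of all classes in $\OO$ of norm $<3^{d}$ \emph{other} than the classes of $1$ and $2$, and set
\[
L=\qf{1,1}\perp\qf{\gamma_1,\dots,\gamma_r}.
\]
That $L$ represents all totally positive integers of norm $<3^{d}$ is immediate and is the easy half: $1=1^{2}$ and $2=1^{2}+1^{2}$ are represented by the block $\qf{1,1}$, each remaining small class $\gamma_j$ is represented by its own coordinate, and representability depends only on the class in $\OO$. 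The entire content is to prove $3\not\to L$.

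To control representations of $3$ I would use a trace bound. For $\mu\in\OPlus_K$ one has $\Tr(\mu)\ge d\,\NN(\mu)^{1/d}\ge d$ by the inequality of arithmetic and geometric means, with equality forcing all conjugates of $\mu$ equal to $1$, i.e. $\mu=1$. Hence in any expression $3=\sum_i c_i w_i^{2}$ with $c_i\in\{1\}\cup\{\gamma_1,\dots,\gamma_r\}$ and $w_i\neq 0$, the nonzero summands are totally positive integers of trace $\ge d$ adding up to $\Tr(3)=3d$, so there are at most three of them. The one- and three-summand cases die cheaply: one summand gives $3=c\,w^{2}$ with $w$ a non-unit, whence $w^{2}\mid 3$, contradicting squarefreeness of $3$; three summands must each equal $1$, which requires three coordinates taking the value $1$, whereas only the two entries of $\qf{1,1}$ can (a coordinate $\qf{\gamma_j}$ equals $1$ only if $\gamma_j$ is a unit square, but its class is not that of $1$).

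The remaining and genuinely hard case is two summands: $3=A+B$ with $A,B\in\OPlus_K$ and $A,B\prec 3$. Here the key input is a classification lemma: \emph{the only totally positive integers all of whose conjugates lie in $(0,3)$ are $1$ and $2$, unless $\sqrt5\in K$, in which case one also gets $\tfrac{3\pm\sqrt5}{2}=\varphi^{\pm2}$ where $\varphi=\tfrac{1+\sqrt5}{2}$.} Granting this, and using $\sqrt5\notin K$, we get $\{A,B\}=\{1,2\}$; but then one summand equals $2$, and $2=c\,w^{2}$ is impossible, since $c=1$ would make $2$ a square, while $c=\gamma_j$ (of class $\neq 2$) would force $w$ to be a non-unit with $w^{2}\mid 2$, against squarefreeness of $2$. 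This closes the last case, so $3$ is a truant of $L$ and therefore critical. Note that this is exactly where both squarefreeness hypotheses are consumed, and that over $\Q(\sqrt5)$ the excluded decomposition $3=\varphi^{2}+\varphi^{-2}$ (both summands represented by $\qf 1$) shows $3$ is not critical there, consistent with its absence from Lee's set.

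The main obstacle is precisely the classification lemma. For real quadratic $K$ it reduces to a short explicit computation: a totally positive $A\prec 3$ written as $A=\tfrac{a+b\sqrt D}{2}$ has both conjugates in $(0,3)$, forcing $|b|\sqrt D<3$, and running through the finitely many $(a,b)$ leaves only the rational $A\in\{1,2\}$, except for $D=5$, where $(a,b)=(3,1)$ produces $\varphi^{\pm2}$; the clean role of $\sqrt5$ is transparent, as $\varphi^{\pm2}$ are the roots of $x^{2}-3x+1$, the only non-rational solutions of $A(3-A)\in\Z$ with conjugates in $(0,3)$. For general totally real $K$ one must instead argue that a totally positive integer with all conjugates in the length-$3$ interval $(0,3)$ is a "small-span" integer and that the only non-rational such integers generate $\Q(\sqrt5)$; this structural statement about totally positive integers with all conjugates below $3$ is where the real work lies, and it is the exact point at which $\sqrt5\notin K$ becomes indispensable.
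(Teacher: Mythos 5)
Your construction and case analysis for part (3) match the paper's almost exactly (the paper uses the lattice $\qf{1,1,\alpha_1,\ldots,\alpha_n}$ with the $\alpha_i$ running over squarefree classes of norm below $\NN(3)$ other than those of $1$ and $2$, and the same one/two/three-summand split), but you have left the decisive step unproven. Your ``classification lemma'' --- that the only totally positive algebraic integers with all conjugates in $(0,3)$ are $1$, $2$, and, when $\sqrt5\in K$, $\frac{3\pm\sqrt5}{2}$ --- is verified only for real quadratic $K$, and you explicitly concede that the general totally real case ``is where the real work lies.'' Since the proposition is stated for arbitrary totally real $K$, this is a genuine gap: as written, your argument proves part (3) only for quadratic fields. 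The finiteness of such ``small-span'' integers does follow from Schur's theorem (the interval has length $3<4$), but finiteness is not the enumeration you need.

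The gap is closed by pushing your own trace inequality one step further, which is exactly what the paper does. In a two-summand decomposition $3=A+B$ with $A,B\in\OPlus_K$, the traces satisfy $\Tr(A)+\Tr(B)=3d$, so one summand, say $A$, has $\Tr_{K/\Q}(A)\leq 3d/2$, i.e.\ absolute trace at most $3/2$. By the classical Schur--Siegel result on the trace problem (cited as \cite{Schur} in the paper), the only totally positive algebraic integers of absolute trace at most $3/2$ are $1$ and $\frac{3\pm\sqrt5}{2}$; hence $3=1+2$ or $3=\frac{3+\sqrt5}{2}+\frac{3-\sqrt5}{2}$, and the latter requires $\sqrt5\in K$. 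This replaces your two-sided conjugate classification by a one-sided, purely trace-theoretic input that is classical. Two smaller points: your one-summand case silently assumes $w$ is a non-unit (you should note that $w\in\U_K$ would force the diagonal entry into the class of $3$, which has norm $\NN(3)$ and is therefore absent from your list); and your proposal addresses only part (3) of the proposition --- parts (1) and (2) are never argued, although (1) is immediate ($\{0\}$ has truant $1$) and (2) follows from the same template using the fact that the only decomposition of $2$ into totally positive integers is $1+1$.
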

\begin{proof}
In all parts, we use Proposition \ref{pr:critical}. The claim about $1$ is trivial, as $\{0\}$ has truant $1$. Further, we use the fact that the only decompositions of $2$ and $3$ as a sum of two or more totally positive algebraic integers are $2=1+1$ and $3=2+1=1+1+1=\bigl(\frac{1+\sqrt5}{2}\bigr)^2+\bigl(\frac{1-\sqrt5}{2}\bigr)^2$. This follows from known results on the Schur--Siegel--Smythe trace problem \cite[Theorem III]{Si1}: in such a decomposition, at least one of the summands must have absolute trace at most $3/2$, and the only such elements are $1$ and $\frac{3\pm\sqrt5}{2}$. For an explicit proof concerning the decompositions of $2$ and $3$, see \cite[Lemma 2.2]{KKK}.

Clearly $2 \in \Cdiag_K$ implies $2 \in \CC_K$, which in turn implies that $2$ is squarefree. We close the circle of implications by constructing a diagonal lattice $D$ with truant $2$, assuming that $2$ is squarefree. Consider all squarefree elements of $\OO$ with norm under $\NN(2)$ and let $\alpha_1,\ldots,\alpha_n$ be their representatives. Then $D = \qf{\alpha_1,\ldots,\alpha_n}$ is a suitable lattice; it fails to represent $2$, since $2 \not\to \qf{\alpha_i}$ for every $i$ by the squarefree property of $2$, and $1$ is represented only by one of the forms $\qf{\alpha_i}$, not by two of them.

For the last statement, we construct a diagonal lattice with truant $3$. This time, let $\alpha_1,\ldots,\alpha_n$ be representatives of all squarefree elements of $\OO$ with norm under $\NN(3)$ with the exclusion of $\U_K^2$ and $2\U_K^2$. Then the lattice $D=\qf{1,1,\alpha_1,\ldots,\alpha_n}$ represents all elements with norm under $\NN(3)$, and it remains to show that it does not represent $3$. Since $3$ is squarefree, it is represented neither by $\qf{1}$ nor by any $\qf{\alpha_i}$. Thus, any representation should use some decomposition of $3$. However, since $2$ is squarefree, it is also represented neither by $\qf{1}$ nor by any $\qf{\alpha_i}$. Therefore, for fields $K$ not containing $\sqrt5$, the only option is $1+1+1$. However, none of the $\qf{\alpha_i}$ represent $1$. Thus $3 \not\to D$, which concludes the proof.
\end{proof}

\noindent
\textbf{Convention.} In the rest of the section, we will establish that, under certain conditions, some elements of the criterion set for a field $F$ must lie in the criterion set for another field $K$. We will formulate the statements as ``$\CC_{F}^X\cap A \subset \CC_K^X$'' (for various sets $A\subset \O_F^+$). However, this is formulated imprecisely, as criterion sets are subsets of $\OO$, not $\OPlus_K$ (recall Definition \ref{de:crit}). The precise versions would be: \textit{For each $a \in \O_F^+,  a\in A$, we have $a\U_F^2\in \CC_F^X \Rightarrow a \U_K^2\in \CC_K^X$.}

\begin{proposition} \label{pr:15iscritical}
Let $K$ be a totally real number field and let $n \in \Cdiag_{\Q}=\{1,2,3,5,6,7,10,14,15\}$.
Assume that:
 \begin{enumerate}
     \item[(a)] If $\alpha \preceq n$ for $\alpha \in \OPlus_K$, then $\alpha \in \Z$.
     \item[(b)] For all $m \in \{1,2,\dots, n\}$ and $\alpha,\omega\in\O_K$ such that $m=\alpha\omega^2$, there exists $\epsilon \in \U_K$ satisfying $\epsilon\omega \in \Z$.
 \end{enumerate}
Then
\[
\Cdiag_{\Q}\cap \{1,2,\dots, n\} \subset \Cdiag_K.
\]
\end{proposition}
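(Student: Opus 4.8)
The plan is to fix a single $n'\in\Cdiag_{\Q}\cap\{1,\dots,n\}$ and to prove $n'\in\Cdiag_K$ by exhibiting a diagonal $\O_K$-lattice $D$ having $n'$ as a truant; by Proposition \ref{pr:critical} this makes $n'$ $\diag$-critical over $K$, so $n'\in\Cdiag_K$. Since $n'\in\Cdiag_{\Q}$, it is $\diag$-critical over $\Q$, so Proposition \ref{pr:critical} applied to $K=\Q$ furnishes a diagonal $\Z$-lattice $E=\qf{c_1,\dots,c_r}$ with $c_i\in\N$ that represents each of $1,\dots,n'-1$ but not $n'$. I would then take $\alpha_1,\dots,\alpha_s$ to be representatives of those squarefree classes in $\OO$ of norm $<\NN(n')$ that contain no rational integer, and set
\[
D=\qf{c_1,\dots,c_r,\alpha_1,\dots,\alpha_s}.
\]

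First I would verify that $D$ represents every $\gamma\in\OO$ with $\NN(\gamma)<\NN(n')$. Pulling out square factors (norms drop, so this terminates), write $\gamma=\alpha\mu^2$ with $\alpha$ squarefree and totally positive; then $\NN(\alpha)\le\NN(\gamma)<\NN(n')$. If the class of $\alpha$ contains no rational integer, then $\alpha$ is one of the $\alpha_j$ and $\gamma=\alpha_j\mu^2\to\qf{\alpha_j}$. Otherwise $\alpha$ equals a rational integer up to a unit square, and $\NN(\alpha)<\NN(n')=(n')^{d}$ forces $\alpha<n'$, so $\alpha\in\{1,\dots,n'-1\}$ is represented by $E$; scaling that representation by $\mu^2$ gives $\gamma\to E$. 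In both cases $\gamma\to D$.

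The heart of the matter is to show $n'\not\to D$, and this is where hypotheses (a) and (b) are used. Suppose $n'=\sum_i c_iy_i^2+\sum_j\alpha_jw_j^2$ with $y_i,w_j\in\O_K$. Each nonzero summand is totally positive and, since the remaining summands are totally nonnegative, satisfies $\preceq n'\preceq n$; by (a) every such summand is therefore a rational integer lying in $\{1,\dots,n\}$. For a nonzero term $\alpha_jw_j^2$, hypothesis (b) gives a unit $\epsilon$ with $\epsilon w_j\in\Z$, whence $\alpha_j\epsilon^{-2}=(\alpha_jw_j^2)/(\epsilon w_j)^2$ is a rational number that is an algebraic integer, hence lies in $\Z$; but then the class of $\alpha_j$ in $\OO$ would contain a rational integer, contrary to its choice. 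So every $w_j=0$. For a nonzero term $c_iy_i^2\in\Z$ we have $y_i^2=c_iy_i^2/c_i\in\Q$, while (b) again gives a unit $\epsilon$ with $\epsilon y_i\in\Z$; then $\epsilon^2=(\epsilon y_i)^2/y_i^2\in\Q\cap\U_K=\{\pm1\}$, and $\epsilon^2\succ0$ forces $\epsilon^2=1$, so $y_i\in\Z$. Consequently $n'=\sum_i c_iy_i^2$ with all $y_i\in\Z$ is a genuine representation of $n'$ by $E$ over $\Z$, contradicting the defining property of $E$. Thus $n'\not\to D$, so $n'$ is a truant of $D$ and $n'\in\Cdiag_K$; the degenerate cases $r=0$ (only $n'=1$) and $s=0$ are immediate.

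The step I expect to be the main obstacle is this last one, because over a general field the rational form $E$ can represent totally positive integers through irrational vectors, and a priori the extra non-rational coefficients $\alpha_j$ might combine to produce the rational value $n'$. Hypothesis (a) neutralizes the first difficulty by forcing every summand into $\Z$, and hypothesis (b) is exactly what neutralizes the second: it is indispensable, as its failure over $\Q(\sqrt2)$ --- where $2=1\cdot(\sqrt2)^2$ but no unit $\epsilon$ makes $\epsilon\sqrt2$ rational --- corresponds precisely to $2$ no longer behaving over $K$ as it does over $\Q$. Note also that (a) already excludes $\sqrt5\in K$ once $n\ge3$, since $\bigl((1+\sqrt5)/2\bigr)^2=(3+\sqrt5)/2\preceq n$ is not rational, which is why no separate hypothesis on $\sqrt5$ is needed.
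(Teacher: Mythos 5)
Your proposal is correct and takes essentially the same approach as the paper: the paper likewise forms the diagonal lattice $D\perp T_n\otimes\O_K$ (with $T_n$ a rational diagonal form with truant $n$ and $D$ built from squarefree classes of norm $<\NN(n)$ not meeting $\Z^+$), and uses (a) to force each summand into $\{0,1,\dots,n\}$ and (b) to force the representing vector into $\Z$, contradicting the truancy of $T_n$ over $\Z$. The only cosmetic difference is that the paper lists the forms $T_n$ explicitly and phrases the reduction to a single element as a separate Claim.
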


Note that condition (b) for $m=1$ is always satisfied, as one can take $\epsilon=\omega^{-1}$. Also note that $\Cdiag_{\Q}=\Ccl_{\Q}$ \cite{Bh}.

\begin{proof}
It suffices to prove that:

\textbf{Claim.} Under the assumptions of Proposition \ref{pr:15iscritical}, we have
$n\in \Cdiag_K$.

For when we take $k\in \Cdiag_{\Q}\cap \{1,2,\dots, n\}$, we can then apply the Claim to the element $k$ in place of $n$ (the assumptions (a) and (b) for $k$ are a direct weakening of these assumptions for $n$, as $k\preceq n$) to obtain that also $k\in \Cdiag_K$.

\medskip

To prove the Claim, we shall construct a diagonal $\O_K$-lattice with truant $n$. To prevent confusion, we will denote diagonal $\Z$-lattices as $\qf{\dots}_\Z$ and diagonal $\O_K$-lattices as $\qf{\dots}$.
As $n\in \Cdiag_\Q$, there is a diagonal $\Z$-lattice $T_n$ with truant $n$. While we will not need it, we in fact know these lattices explicitly, e.g., we can take $T_1=\qf{}_\Z = \{0\}$, $T_2=\qf{1}_\Z$, $T_3=\qf{1,1}_\Z$, $T_5=\qf{1,2}_\Z$, $T_6=\qf{1,1,3}_\Z$, $T_7=\qf{1,1,1}_\Z$, $T_{10}=\qf{1,2,3}_\Z$, $T_{14}=\qf{1,1,2}_\Z$, and $T_{15}=\qf{1,2,5,5}_\Z$. Let $T=T_n=\qf{a_1,\dots,a_\ell}_\Z$.

Consider all squarefree elements of $\OO$ of norm less than $\NN(n)$ such that
their class does not equal $c\U_K^2$ for $c \in \Z^+$. 
Denote their representatives as $\alpha_1, \ldots, \alpha_k\in\O_K^+$ and define the diagonal $\O_K$-lattice $D=\qf{\alpha_1,\ldots,\alpha_k}$ (note that it can happen that there are no such elements, in which case $k=0$ and $D$ is empty).

Now we claim that the diagonal $\O_K$-lattice 
$$L = D\perp T\otimes\O_K= \qf{\alpha_1,\ldots,\alpha_k}\perp \qf{a_1,\dots,a_\ell}$$ 
represents all totally positive elements of norm less then $\NN(n)$: It clearly suffices to check this for squarefree elements, and those are represented either by $T \otimes \O_K$ if they are a $\U_K^2$-multiple of an element of $\Z^+$, or by $D$ if they are not.

To show that $n$ is a truant of $L$ (and thus the Claim and hence the whole Proposition holds), it remains to show that $L$ does not represent $n$. For contradiction, assume that it does, i.e., 
$$n=\sum_{i=1}^k\alpha_ix_i^2+\sum_{j=1}^\ell a_jy_j^2, \text{ where } x_i,y_j\in\O_K.$$ 

By assumption (a), we have $\alpha_ix_i^2, a_jy_j^2\in\{0,1,2,\dots,n\}$. By assumption (b) we then have 
$$x_i=\epsilon_it_i,y_j=\zeta_ju_j \text{ with } \epsilon_i,\zeta_j\in \U_K,t_i,u_j\in\Z.$$
(While we cannot apply (b) when $x_i=0$ or $y_j=0$, in such a case we can set $t_i=0$ or $u_j=0$ to reach the same conclusion; it will be also convenient to set $\zeta_j=1$ in the latter case).

If $t_i\neq 0$ for some $i$, then we have $\alpha_i\epsilon_i^2 = \alpha_i x_i^2 / t_i^2 \in \Q \cap \O_K = \Z$ with $\epsilon_i\in \U_K$, contradicting our choice of $\alpha_i$. Thus $t_i=0$ for all $i$.

We now conclude the proof by showing $\zeta_j^2 = 1$ for all $j$. When $u_j = 0$, this is immediate by our choice $\zeta_j=1$. If $u_j\neq 0$ for some $j$, then we have $a_ju_j^2\zeta_j^2\in\Z$ with $a_j,u_j\in \Z$. Thus $\zeta_j^2\in\Z$, but as $\zeta_j\in \U_K$ is invertible, we must have $\zeta_j^2=\pm 1$, and so indeed $\zeta_j^2=1$. 
We thus have 
$$n=\sum_{j=1}^\ell a_ju_j^2,$$
which means that $T$ represents $n$ over $\Z$, a contradiction.
\end{proof}

With a bit extra work we can establish that, under suitable conditions, the elements of 
\begin{align*}
    \CC_{\Q}=&\{1, 2, 3, 5, 6, 7, 10, 13, 14, 15, 17, 19, 21, 22, 23, 26,\\
&29, 30, 31, 34, 35, 37, 42, 58, 93, 110, 145, 203, 290\}
\end{align*}
are also contained in criterion sets.

\begin{proposition} \label{pr:290iscritical}
Let $K$ be a totally real number field and take $n \in \CC_{\Q}$. 
Assume that:
 \begin{enumerate}
     \item[(a)] If $\alpha \preceq 2n$ for $\alpha \in \OPlus_K$, then $\alpha \in \Z$.
     \item[(b)] For all $m \in \{1,2,\dots, 2n\}$ and $\alpha,\omega\in\O_K$ such that $m=\alpha\omega^2$, there exists $\epsilon \in \U_K$ satisfying $\epsilon\omega \in \Z$.
 \end{enumerate}
Then
\[
\CC_{\Q}\cap \{1,2,\dots, n\} \subset \CC_K.
\]
\end{proposition}

\begin{proof}
We proceed similarly as in the proof of Proposition \ref{pr:15iscritical}; again it suffices to show that $n\in\CC_K$, for which we shall construct an $\O_K$-lattice with truant $n$. 

As $n\in \CC_\Q$, there is a quadratic $\Z$-form $T$ with truant $n$. This time we will need some information about $T_n$, specifically the fact that $T_n$ can be chosen of rank $\leq 5$, see \cite[proof of Theorem 2]{BH}.
The quadratic form $2T$ is then classical and has rank $\leq 5$, so we know that $2T$ is represented by the sum of squares of integer linear forms by the theorem of Mordell and Ko \cite{Ko}, i.e.,
\begin{equation}\label{eq:2T}
    2T(Y_1,\dots,Y_\ell)=\sum_{h=1}^g L_h(Y_1,\dots,Y_\ell)^2, \text{ where } L_h(Y_1,\dots,Y_\ell)=\sum_{j=1}^\ell a_{hj} Y_j, a_{hj}\in\Z.
\end{equation}

Consider all squarefree elements of $\OO$ of norm less than $\NN(n)$ 
such that
their class does not equal $c\U_K^2$ for $c \in \Z^+$. 
Denote their representatives as $\alpha_1, \ldots, \alpha_k\in\O_K^+$ and define the diagonal $\O_K$-lattice $D=\qf{\alpha_1,\ldots,\alpha_k}$ (again, $D$ can be empty).

Just as in the previous proof, we claim that the $\O_K$-lattice 
$L = D\perp T\otimes\O_K$ 
represents all totally positive elements of norm less then $\NN(n)$: It suffices to check this for squarefree elements, and each of those is represented either by $T \otimes \O_K$ or by $D$.

It remains to show that $L$ does not represent $n$. For contradiction, assume that it does, i.e., 
$$n=\sum_{i=1}^k\alpha_ix_i^2+T(y_1,\dots,y_\ell), \text{ where } x_i,y_j\in\O_K.$$ 

By assumption (a), we have $\alpha_ix_i^2\in\{0,1,2,\dots,n\}$. By assumption (b) we then have 
$x_i=\epsilon_it_i, \text{ with } \epsilon_i\in \U_K,t_i\in\Z.$
If $t_i\neq 0$ for some $i$, then we have $\alpha_i\epsilon_i^2\in\Z$ with $\epsilon_i\in \U_K$, contradicting our choice of $\alpha_i$. Thus $t_i=0$ for all $i$ and $n=T(y_1,\dots,y_\ell), \text{ where } y_j\in\O_K.$

Considering \eqref{eq:2T} and assumption (a), we see that $L_h(y_1,\dots,y_\ell)^2=\left(\sum_{j=1}^\ell a_{hj} y_j\right)^2\in\Z$ for every $h$.
Using assumption (b), we get $\sum_{j=1}^\ell a_{hj} y_j=s_h\in\Z$ (similarly as in the proof of Proposition \ref{pr:15iscritical}).

Choose an integral basis $\omega_1=1,\omega_2,\dots,\omega_d$ for $\O_K$ and write $y_j=\sum_{f=1}^d y_{jf}\omega_f$ with $y_{jf}\in\Z$ so that we have 
$\sum_{j=1}^\ell a_{hj} \sum_{f=1}^d y_{jf}\omega_f=s_h$.

The elements $\omega_1=1,\omega_2,\dots,\omega_d$ are $\Q$-linearly independent, hence 
$\sum_{j=1}^\ell a_{hj}  y_{j1}=s_h$. But this means that $$T(y_{11},\dots,y_{\ell 1})=\frac12 \sum_{h=1}^g L_h(y_{11},\dots,y_{\ell 1})^2=\frac12 \sum_{h=1}^g s_h^2=n,$$ a contradiction as we assumed that $n$ was not represented by $T$ over $\Z$.
\end{proof}

In the previous two statements, we have studied when inclusions of the type $\CC^X_\Q \subset \CC^X_K  \cap \Q$ hold. Going in the other direction is significantly easier.

\begin{proposition}\label{prop:other inclusion} Let $K$ be a totally real number field.
 \begin{enumerate}
     \item[(a)] Assume that for all $\beta \in \O_K$ we have: If $\beta^2 \preceq 4 \cdot 203 \cdot 290$, then $\beta \in \Z$. Then $\CC_K \cap \Q \subset \CC_\Q$.
     \item[(b)] Assume that for all $\beta \in \O_K$ we have: If $\beta^2 \preceq 14 \cdot 15$, then $\beta \in \Z$. Then $\Ccl_K \cap \Q \subset \Ccl_\Q$.
 \end{enumerate}
\end{proposition}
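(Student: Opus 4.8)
The plan is to argue both parts directly and constructively. Fix $n \in \CC_K^X \cap \Q$, where $X = \nc$ in part (a) and $X = \cl$ in part (b); I want to produce a \emph{rational} $X$-lattice whose $\Q$-truant is $n$, since by Proposition \ref{pr:critical} (applied over $\Q$) this yields $n \in \CC_\Q^X$. Proposition \ref{pr:critical} over $K$ turns the hypothesis $n \in \CC_K^X$ into an $X$-lattice $M$ over $\O_K$ with $K$-truant $n$; in particular $M$ represents every totally positive integer of norm below $\NN(n) = n^{[K:\Q]}$, hence all of $1, 2, \dots, n-1$, but not $n$. The idea is to carve out a $\Z$-sublattice $M_\Z \subseteq M$ of the right type whose $\Q$-truant is already $n$. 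Because $M_\Z \subseteq M$, the lattice $M_\Z$ automatically fails to represent $n$, so the only genuine content is to make $M_\Z$ an honest $\Z$-lattice (classical in part (b)) that still represents $1, \dots, n-1$.

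To build $M_\Z$ I would escalate greedily inside $M$: set $M_\Z^{(0)} = \{0\}$, and as long as the current $\Z$-lattice $M_\Z^{(i)}$ has $\Q$-truant $t_i < n$, choose $w_i \in M$ with $Q(w_i) = t_i$ (possible since $t_i < n$ gives $\NN(t_i) < \NN(n)$, so $M$ represents $t_i$) and set $M_\Z^{(i+1)} = M_\Z^{(i)} + \Z w_i$. Each step strictly increases the truant, which stays $\leq n$ because $M_\Z^{(i)} \subseteq M$ never represents $n$; hence the process terminates at an $M_\Z$ with truant exactly $n$, representing $1, \dots, n-1$ but not $n$. The key structural point is that every truant $t_i$ is, by Proposition \ref{pr:critical} over $\Q$, an element of $\CC_\Q^X$; since $t_i < n$ and $\CC_\Q$ (resp. $\Ccl_\Q$) has no elements strictly between its two largest members $203, 290$ (resp. $14, 15$), we obtain $t_i \leq 203$ (resp. $t_i \leq 14$) uniformly in $n$.

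The main obstacle, and the only place where hypotheses (a), (b) are used, is checking that $M_\Z$ is a genuine $\Z$-lattice of the correct type, i.e. that the off-diagonal bilinear values among the generators are rational integers. For generators $w_i, w_j$ with $Q(w_i) = t_i$, $Q(w_j) = t_j$, Cauchy--Schwarz in each real embedding $\sigma_\ell$ (where the localized form is positive definite and the rational values $t_i, t_j$ are fixed) gives $\sigma_\ell\bigl(B(w_i, w_j)\bigr)^2 \leq t_i t_j$. In the classical case (b) one has $B(w_i, w_j) \in \O_K$ with $\bigl(B(w_i, w_j)\bigr)^2 \preceq t_i t_j \leq 14 \cdot 15$, so hypothesis (b) forces $B(w_i, w_j) \in \Z$ and $M_\Z$ is a classical $\Z$-lattice. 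In the non-classical case (a) one only knows $2B(w_i, w_j) \in \O_K$, and here $\bigl(2B(w_i, w_j)\bigr)^2 \preceq 4\, t_i t_j \leq 4 \cdot 203 \cdot 290$, so hypothesis (a) forces $2B(w_i, w_j) \in \Z$, which is exactly what guarantees that $Q$ is $\Z$-valued on $M_\Z$. The slightly loose bounds $14 \cdot 15$ and $4 \cdot 203 \cdot 290$ (rather than $14^2$ and $4 \cdot 203^2$) arise from replacing one factor by the largest element of the relevant rational criterion set, which is convenient for a single hypothesis covering every admissible $n$.

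Once $M_\Z$ is known to be a $\Z$-lattice of the right type with $\Q$-truant $n$, Proposition \ref{pr:critical} yields $n \in \CC_\Q^X$, proving both inclusions. I expect the cross-term integrality to be the delicate step; everything else (existence of the representing vectors, termination of the escalation, and the automatic failure to represent $n$) is soft and follows from Proposition \ref{pr:critical} together with the inclusion $M_\Z \subseteq M$.
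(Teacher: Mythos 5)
Your proposal is correct and follows essentially the same route as the paper: take vectors in the $\O_K$-lattice representing the relevant elements of $\CC_\Q^X$ below $n$, use Cauchy--Schwarz together with the hypothesis to force the cross terms $B(w_i,w_j)$ into $\tfrac12\Z$ (resp.\ $\Z$), and conclude that the resulting $\Z$-sublattice is an $X$-lattice over $\Z$ whose truant must be $n$. The only difference is that you build the sublattice by a greedy escalation while the paper simply takes the $\Z$-span of one representing vector for each $a_i\in\CC_\Q$ with $a_i<n$ all at once; this is cosmetic, and your closing observation that at most one generator can have value $290$ (resp.\ $15$) is exactly why the bound $4\cdot 203\cdot 290$ (resp.\ $14\cdot 15$) suffices.
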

\begin{proof}
We prove (a); the proof of (b) is analogous. Consider $n \in \CC_K \cap \Q$. Then there is an $\O_K$-lattice $L$ with truant $n$. Let $\CC_\Q = \{a_1, a_2, \ldots, a_{29}\}$ be numbered in the increasing order, so $a_{28}=203$ and $a_{29} = 290$. For every $a_i < n$, pick a vector $v_i \in L$ such that $Q(v_i) = a_i$. Consider now $B(v_i,v_j)$ for every distinct pair $v_i, v_j$. By the Cauchy--Schwarz inequality, we have $\bigl(2B(v_i,v_j)\bigr)^2 \leq 4 a_i a_j \leq 4a_{28} a_{29} = 4 \cdot 203 \cdot 290$, so by assumption, $B(v_i,v_j) \in \tfrac12\Z$. This means that the $\Z$-span of the $v_i$ is a well-defined quadratic $\Z$-lattice; denote it by $L'$. We also see that $L'$ has truant $n$: It clearly does not represent $n$. On the other hand, the truant of a $\Z$-lattice is an element of $\CC_\Q$, and all elements of $\CC_\Q$ smaller than $n$ are represented by $L'$ by construction. Thus the truant of $L'$ is $n$, which yields $n \in \CC_\Q$.
\end{proof}

For concreteness, observe that one can easily establish explicit sufficient conditions 
for 
the assumptions from Propositions \ref{pr:15iscritical}, \ref{pr:290iscritical}, and \ref{prop:other inclusion} to hold.

\begin{lemma} \label{le:47}
    Let $K$ be a totally real number field and let $m\in \Z^+$.
 \begin{enumerate}
     \item[(a)] Assume that there are no proper intermediate fields $\Q\subsetneq F\subsetneq K$ and that $\mathrm{disc}_K$ is sufficiently large (depending on $m$ and $[K:\Q]$). Then:
     
     If $\alpha \preceq m$ for $\alpha \in \OPlus_K$, then $\alpha \in \Z$.
     
     If $\beta^2 \preceq m$ for $\beta \in \O_K$, then $\beta \in \Z$.

     \item[(b)] Assume that every prime that divides $m$ is inert in $K$. If $\alpha,\omega\in\O_K$ satisfy $m=\alpha\omega^2$, then there exists $\epsilon \in \U_K$ such that $\epsilon\omega \in \Z$.
 \end{enumerate}
\end{lemma}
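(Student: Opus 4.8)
The plan is to prove the two parts by independent methods: part (b) is pure ideal factorization, whereas part (a) combines the classical finiteness of algebraic integers with bounded conjugates with the no-intermediate-field hypothesis. I would start with part (b). Since every rational prime $p \mid m$ is inert in $K$, writing $m = \prod_i p_i^{e_i}$ yields $m\O_K = \prod_i \mathfrak{P}_i^{e_i}$ where the $\mathfrak{P}_i = p_i\O_K$ are distinct prime ideals. From $m = \alpha\omega^2$ we get $(\omega\O_K)^2 \mid m\O_K$, so the prime ideals occurring in $\omega\O_K$ are among the $\mathfrak{P}_i$ and $\omega\O_K = \prod_i \mathfrak{P}_i^{f_i}$ with $2f_i \le e_i$. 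As each $\mathfrak{P}_i = p_i\O_K$ is generated by a rational integer, this gives $\omega\O_K = c\O_K$ with $c = \prod_i p_i^{f_i} \in \Z^+$. Thus $\omega$ and $c$ are associates, and taking $\epsilon \in \U_K$ to be the inverse of the unit relating them gives $\epsilon\omega = c \in \Z$.

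For part (a) I would handle both statements at once, noting that each hypothesis bounds all archimedean conjugates of the element in question: $0 \prec \alpha \preceq m$ forces $\sigma_i(\alpha) \in (0,m]$, and $\beta^2 \preceq m$ forces $\sigma_i(\beta) \in [-\sqrt m, \sqrt m]$. Writing $\theta$ for the element, if $\theta \in \Q$ then $\theta \in \Q \cap \O_K = \Z$ and we are done. Otherwise $\Q(\theta)$ strictly contains $\Q$, so by hypothesis $\Q(\theta) = K$ and $\theta$ is a primitive element of degree $d = [K:\Q]$ all of whose conjugates lie in a fixed interval $I$ depending only on $m$. The classical input is that there are only finitely many algebraic integers of degree $d$ with all conjugates in $I$, since the coefficients of the monic minimal polynomial are rational integers bounded in terms of $d$ and the length of $I$. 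Hence $K = \Q(\theta)$ lies in a finite list of number fields whose discriminants are bounded by some $B = B(m,d)$; assuming $\mathrm{disc}_K > B$ then contradicts the existence of $\theta \notin \Z$.

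The step requiring the most care is making this finiteness precise and, crucially, verifying that the discriminant bound $B$ depends only on $m$ and $[K:\Q]$ rather than on $K$ itself; this reduces to checking that the bound on the minimal-polynomial coefficients, and hence the finite list of candidate elements $\theta$ and candidate fields $\Q(\theta)$, is determined purely by $d$ and the interval $I$. Once that is in hand, the no-intermediate-field hypothesis is what upgrades ``$\theta$ has bounded conjugates'' to ``$K$ itself is on a finite list,'' which the large-discriminant assumption excludes. By contrast, the ideal-theoretic part (b) involves no real difficulty.
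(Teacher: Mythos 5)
Your proof is correct. Part (b) is essentially identical to the paper's argument: both factor $m$ into inert primes, observe that $(\omega)$ must be a product of the principal ideals $(p_i)$ with exponents $\ell_i$ satisfying $2\ell_i \le k_i$, and conclude that $\omega$ is an associate of the rational integer $\prod_i p_i^{\ell_i}$. Part (a), however, takes a genuinely different route. The paper invokes the Stieltjes--Schur theorem, which gives the quantitative lower bound $\Tr(\alpha) \geq c_d \Delta_\alpha^{1/(d^2-d)} \geq c_d\,\mathrm{disc}_K^{1/(d^2-d)}$ for a totally positive $\alpha$ generating $K$ (and a companion inequality for the $\beta$ statement); comparing this with $\Tr(\alpha) \le md$ yields an explicit threshold for $\mathrm{disc}_K$. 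You instead use the classical Northcott/Kronecker-type finiteness: an algebraic integer $\theta$ of degree $d$ with all conjugates in a fixed interval $I$ has its monic minimal polynomial's coefficients bounded in terms of $d$ and $I$ only, so there are finitely many such $\theta$, hence finitely many candidate fields $\Q(\theta)$, hence a bound $B(m,d)$ on their discriminants; the no-intermediate-field hypothesis forces $\Q(\theta)=K$ for any non-rational $\theta$, which $\mathrm{disc}_K > B$ excludes. Your uniform treatment of both statements via bounded conjugates is cleaner than the paper's appeal to two separate Schur-type inequalities, and your worry about the dependence of $B$ only on $m$ and $d$ is resolved exactly as you say. What the paper's route buys is an explicit, directly computable threshold via the constant $c_d$; your argument as written is soft, though it too can be made effective by noting that $\mathrm{disc}_K$ divides the discriminant of the minimal polynomial of $\theta$, which is bounded by a power of the length of $I$.
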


\begin{proof} Let $d=[K:\Q]$ be the degree of $K$ over $\Q$, and let us assume that $d>1$, as the statement is trivial when $K=\Q$.

(a) If $\alpha \preceq m$ for $\alpha \in \OPlus_K$, then $0<\Tr(\alpha)\leq \Tr(m)=md$. By the Stieltjes--Schur Theorem \cite[§3]{Schur}, the trace of $\alpha$ grows with the discriminant $\Delta_\alpha$ of the element $\alpha$, which is greater or equal to the discriminant of the number field $\Q(\alpha)$. If we assume for contradiction that $\alpha \not \in \Z$, then necessarily $\Q(\alpha)=K$ as there are no proper intermediate fields, and then we have
\begin{equation}\label{eq:schur}
md\geq \Tr(\alpha)\geq c_d \Delta_\alpha^{1/(d^2-d)}\geq c_d \mathrm{disc}_K^{1/(d^2-d)},
\end{equation}
where $c_d$ is an explicit constant from the Stieltjes--Schur Theorem. This is a contradiction when $\mathrm{disc}_K$ is sufficiently large.

Concerning $\beta$, one just uses another Stieltjes--Schur inequality (that says $\Tr(\beta^2)\geq c_d' \Delta_\beta^{2/(d^2-d)}$ for some $c_d'>0$, see \cite[Proposition 2]{Ka3}) in the preceding argument.

    (b) Let $m=\prod_i p_i^{k_i}$ be the prime factorization of $m$ in $\Z$. By the assumption, each $p_i$ is inert in $K$, i.e., $(p_i)$ are pairwise distinct prime ideals in $\O_K$. As $\omega^2\mid m$, we have $(\omega)=\prod_{i} (p_i)^{\ell_{i}}$ with $2\ell_{i}\leq k_i$. Thus there is a unit $\epsilon\in\U_K$ such that $\epsilon\omega=\prod_{i} p_i^{\ell_{i}}\in\Z$.
\end{proof}

One can also generalize the preceding arguments to sometimes show that  $\Cdiag_F \subset \Cdiag_K$ for an arbitrary extension of totally real number fields, except that one needs to add an assumption (c) that was automatically satisfied for $F=\Q$. The proof of the following result is otherwise the same as the proof of Proposition \ref{pr:15iscritical}, and so we omit it.

\begin{proposition} \label{pr:cheapgeneralization}
Let $K \supset F$ be totally real number fields. Let $a\U_F^2 \in \Cdiag_F$. 
Assume that:
 \begin{enumerate}
     \item[(a)] 
     If $\alpha \preceq a$ for $\alpha \in \OPlus_K$, then $\alpha \in \O_F$.
     \item[(b)] If $b \in \O_F^+, b\preceq a,$ can be written as $b=\alpha\omega^2$ with $\alpha,\omega\in\O_K$, then there exists $\epsilon \in \U_K$ such that $\epsilon\omega \in \O_F$.
     \item[(c)] If $\epsilon^2\in\U_F$ for $\epsilon\in\U_K$, then $\epsilon\in\U_F$.
 \end{enumerate}
Then $\Cdiag_F \cap \{b \in \O_F^+\mid  b\preceq a\} \subset \Cdiag_K$.    
\end{proposition}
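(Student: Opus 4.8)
The plan is to follow the proof of Proposition~\ref{pr:15iscritical} essentially verbatim, replacing $\Z$ by $\O_F$, $\Z^+$ by $\OPlus_F$, and $\Q$ by $F$ throughout, and to isolate the single place where the new hypothesis (c) is needed (over $\Q$ it came for free because $\U_\Q=\{\pm1\}$). Exactly as there, it suffices to prove the single claim that $a\,\U_K^2\in\Cdiag_K$: for a smaller $b\,\U_F^2\in\Cdiag_F$ with $b\preceq a$, the hypotheses (a)--(c) are inherited (they only weaken when $a$ is replaced by $b\preceq a$), so applying the claim to $b$ yields $b\,\U_K^2\in\Cdiag_K$. To prove the claim I invoke Proposition~\ref{pr:critical}: since $a\,\U_F^2\in\Cdiag_F$, there is a diagonal $\O_F$-lattice $T=\qf{a_1,\dots,a_\ell}$ with $\OPlus_F$-truant $a$, which takes over the role of $T_n$, and I must build a diagonal $\O_K$-lattice $L$ with $\OPlus_K$-truant $a$.

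Mirroring the earlier construction, I let $\alpha_1,\dots,\alpha_k\in\OPlus_K$ represent the squarefree classes of $\OO$ of norm less than $\NN_{K/\Q}(a)$ whose class is \emph{not} of the form $c\,\U_K^2$ with $c\in\OPlus_F$, set $D=\qf{\alpha_1,\dots,\alpha_k}$, and put $L=D\perp T\otimes\O_K$. For the representation part I check, as before, only squarefree $\gamma\in\OPlus_K$ with $\NN_{K/\Q}(\gamma)<\NN_{K/\Q}(a)$: if its class is not over $F$ then $D$ represents it, while if $\gamma=c\delta^2$ with $c\in\OPlus_F$ and $\delta\in\U_K$, the multiplicativity identity $\NN_{K/\Q}(c)=\NN_{F/\Q}(c)^{[K:F]}$ (valid since $c\in F$) turns $\NN_{K/\Q}(c)<\NN_{K/\Q}(a)=\NN_{F/\Q}(a)^{[K:F]}$ into $\NN_{F/\Q}(c)<\NN_{F/\Q}(a)$, so $T$ represents $c$ over $\O_F$ and hence $T\otimes\O_K$ represents $c$ and thus $\gamma$. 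This shows that $L$ represents everything of norm below $\NN_{K/\Q}(a)$.

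It remains to show $a\not\to L$. Supposing $a=\sum_i\alpha_ix_i^2+\sum_j a_jy_j^2$ with $x_i,y_j\in\O_K$, each summand is $\preceq a$, so (a) places every $\alpha_ix_i^2$ and $a_jy_j^2$ in $\O_F$, and (b) then supplies units with $t_i:=\epsilon_ix_i\in\O_F$ and $u_j:=\zeta_jy_j\in\O_F$ for some $\epsilon_i,\zeta_j\in\U_K$ (setting the relevant variable to $0$ when $x_i$ or $y_j$ vanishes). Just as in Proposition~\ref{pr:15iscritical}, if some $t_i\neq0$ then $\alpha_i\epsilon_i^{-2}=\alpha_ix_i^2/t_i^2\in F\cap\O_K=\O_F$ is totally positive, i.e.\ $\alpha_i\,\U_K^2=c\,\U_K^2$ for some $c\in\OPlus_F$, contradicting the choice of $\alpha_i$; hence all $x_i=0$ and $a=\sum_j a_jy_j^2$ with $y_j^2\in\O_F$. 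The one genuinely new step comes now: from $a_jy_j^2,\,a_ju_j^2\in\O_F$ one obtains $\zeta_j^2=u_j^2/y_j^2\in F\cap\U_K=\U_F$ (the case $y_j=0$ being trivial), and hypothesis (c) upgrades this to $\zeta_j\in\U_F$, so that $y_j=\zeta_j^{-1}u_j\in\O_F$. Consequently $a=\sum_j a_jy_j^2$ is a representation of $a$ by $T$ over $\O_F$, contradicting that $a$ is an $\OPlus_F$-truant of $T$.

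The only real obstacle beyond transcription is this final descent: over $\Q$ total positivity forced $\zeta_j^2=1$ automatically, whereas over a general $F$ the relation $\zeta_j^2\in\U_F$ need not give $\zeta_j\in\U_F$, and hypothesis (c) is precisely what is required to push the representation of $a$ back down to $\O_F$. The secondary point to handle carefully is the norm bookkeeping between $F$ and $K$, which is entirely governed by $\NN_{K/\Q}(\,\cdot\,)=\NN_{F/\Q}(\,\cdot\,)^{[K:F]}$ on elements of $F$.
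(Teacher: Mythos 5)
Your proof is correct and follows exactly the route the paper intends: the paper explicitly omits the proof of Proposition \ref{pr:cheapgeneralization}, stating that it is the same as that of Proposition \ref{pr:15iscritical} with the new hypothesis (c) added, and your write-up is a faithful transcription of that argument with the correct identification of the one genuinely new step (descending $\zeta_j\in\U_K$ with $\zeta_j^2\in\U_F$ to $\zeta_j\in\U_F$, which is where (c) enters). The norm bookkeeping via $\NN_{K/\Q}(c)=\NN_{F/\Q}(c)^{[K:F]}$ and the reduction to the single element $a$ are both handled correctly.
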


Finally, note that the argument from the proof of Proposition \ref{prop:other inclusion} also yields the following statement.

\begin{proposition}\label{prop:other F}
    Let $K \supset F$ be totally real number fields. Fix representatives $a_1,\dots,a_t\in\O_F^+$ for $\CC_F$.
    
    Assume that for all $\beta \in \O_K$ and for all $i\neq j$ 
    we have: If $\beta^2\preceq 4a_ia_j$, then $\beta\in\O_F$. Then $\CC_K \cap F \subset \CC_F$.
\end{proposition}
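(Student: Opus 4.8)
The plan is to mimic the argument from the proof of Proposition \ref{prop:other inclusion}, replacing the base field $\Q$ by $F$ and using the Cauchy--Schwarz bound to descend a $K$-lattice to an $F$-lattice. Concretely, suppose $n\in\CC_K\cap F$, meaning $n\in\O_F^+$ and $n\U_K^2\in\CC_K$. By Proposition \ref{pr:critical} there is an $\O_K$-lattice $L$ with $S$-truant $n$ (for $S=\OO$), i.e.\ $L$ represents every totally positive integer of norm smaller than $\NN(n)$ but does not represent $n$. For each $a_i\in\CC_F$ with $a_i\prec n$, I would pick a vector $v_i\in L$ with $Q(v_i)=a_i$; such a vector exists because $\NN(a_i)<\NN(n)$ (as $a_i\prec n$ forces the strict inequality on norms) and $L$ represents all elements of smaller norm.

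The key step is to show that the $\O_F$-span of the chosen vectors $v_i$ is a well-defined quadratic $\O_F$-lattice, i.e.\ that $Q$ takes values in $\O_F$ on this span. For this I would control the off-diagonal bilinear values: for distinct $i,j$, the Cauchy--Schwarz inequality in each real embedding gives $\sigma\bigl((2B(v_i,v_j))^2\bigr)\le 4\,\sigma(a_i)\sigma(a_j)$, so that $\bigl(2B(v_i,v_j)\bigr)^2\preceq 4a_ia_j$. By the hypothesis of the proposition, this forces $2B(v_i,v_j)\in\O_F$, hence every $B(v_i,v_j)\in\tfrac12\O_F$; combined with $Q(v_i)=a_i\in\O_F$, all values $Q\bigl(\sum x_iv_i\bigr)$ for $x_i\in\O_F$ lie in $\O_F$. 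Denote this $\O_F$-lattice by $L'$.

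It then remains to verify that $L'$ has $F$-truant $n$. By construction $L'$ represents each $a_i\prec n$, so in particular it represents all elements of $\CC_F$ of norm smaller than $\NN(n)$; and $L'\subset L$ does not represent $n$ since $L$ does not. Because $\CC_F$ is an $(\OO[F],\nc)$-criterion set (Theorem \ref{th:unique}), any $\O_F$-lattice that is not universal has an $F$-truant lying in $\CC_F$. Applying this to $L'$: if $L'$ failed to represent some element of $\CC_F$, its $F$-truant would be an element of $\CC_F$ not represented by $L'$, yet all elements of $\CC_F$ of norm below $\NN(n)$ are represented, so the truant has norm at least $\NN(n)$; since $n$ itself is not represented and is an element of $F$, one concludes the truant is (the class of) $n$, whence $n\U_F^2\in\CC_F$, i.e.\ $n\in\CC_F$.

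The main obstacle I expect is the bookkeeping needed to pass cleanly between elements of $F$ and their classes in $\OO[F]$ versus $\OO[K]$, and in particular confirming that $n$ is genuinely the $F$-truant of $L'$ rather than some smaller element: one must check that every element of $\CC_F$ strictly below $n$ really is represented by $L'$ and that the descended lattice does not accidentally represent $n$ over $\O_F$ (this last point is immediate since $L'\subset L$). The Cauchy--Schwarz step is routine once the totally-positive-definite structure is invoked embedding-by-embedding, and the hypothesis has evidently been calibrated precisely so that the worst-case product $a_ia_j$ over distinct pairs is covered.
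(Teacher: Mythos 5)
Your overall strategy is exactly the paper's: descend the $\O_K$-lattice $L$ with truant $n$ to an $\O_F$-lattice $L'$ via the Cauchy--Schwarz bound on $2B(v_i,v_j)$, precisely as in Proposition \ref{prop:other inclusion}, and then identify the truant of $L'$. The Cauchy--Schwarz step and the integrality of $Q$ on the $\O_F$-span are fine. There is, however, a genuine gap in which vectors you select. You pick $v_i$ only for those $a_i\in\CC_F$ with $a_i\prec n$, yet later you assert that ``all elements of $\CC_F$ of norm below $\NN(n)$ are represented'' by $L'$. Over $\Q$ these two conditions coincide, but over a proper totally real $F$ the condition $a_i\prec n$ (even up to multiplication by $\U_F^2$) is strictly stronger than $\NN_{F/\Q}(a_i)<\NN_{F/\Q}(n)$: for instance in $\Q(\sqrt2)$ the element $2+\sqrt2$ has norm $2<\NN_{F/\Q}(3)=9$, but no $\U_F^2$-multiple of it is totally smaller than $3$. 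For such an $a_j$ you have not arranged a vector, so $L'$ may fail to represent it; the truant of $L'$ could then be $a_j$ rather than an element of norm at least $\NN_{F/\Q}(n)$, and the conclusion $n\in\CC_F$ does not follow. The fix is immediate: since $L$ has truant $n$ over $\O_K$, it represents every class of $K$-norm less than $\NN_{K/\Q}(n)$, and for $a_i\in\O_F^+$ one has $\NN_{K/\Q}(a_i)=\NN_{F/\Q}(a_i)^{[K:F]}$, so $\NN_{K/\Q}(a_i)<\NN_{K/\Q}(n)$ if and only if $\NN_{F/\Q}(a_i)<\NN_{F/\Q}(n)$. Hence you can, and must, pick $v_i$ for \emph{every} $a_i$ with $\NN_{F/\Q}(a_i)<\NN_{F/\Q}(n)$; the hypothesis on $4a_ia_j$ is stated for all pairs $i\neq j$ and so still covers everything you need.

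A smaller point: your final sentence concludes that ``the truant is (the class of) $n$'', but truants over $F$ need not be unique, since distinct classes can share the same norm. The clean ending is: the non-universal lattice $L'$ has some truant $\tau$, which lies in $\CC_F$ by Proposition \ref{pr:critical}; by the corrected selection of vectors, $\NN_{F/\Q}(\tau)\geq\NN_{F/\Q}(n)$; hence every $\beta\in\O_F^+$ with $\NN_{F/\Q}(\beta)<\NN_{F/\Q}(n)$ is represented by $L'$, while $n$ is not, so $n$ is itself a truant of $L'$ and is therefore $F$-critical.
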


As in Proposition \ref{prop:other inclusion}, we can also easily get a statement concerning $\Ccl_K \cap F \subset \Ccl_F$, but \textit{not} a statement for $\Cdiag_K \cap F \subset \Cdiag_F$, as a sublattice of a diagonal lattice is not necessarily diagonal.

We can again arrange for the assumption (a) of Proposition \ref{pr:cheapgeneralization} and the assumption of Proposition \ref{prop:other F} to be satisfied by using Stieltjes--Schur inequality as in the proof of Lemma \ref{le:47}. We just replace $md$ on the left-hand side of \eqref{eq:schur} by $\Tr(a)$ or $\max\bigl(\Tr(4a_ia_j)\bigr)$; we also need to assume that all subfields $E\subsetneq K$ satisfy $E\subset F$. The only problem is that this yields a less explicit bound on $\mathrm{disc}_K$ (for it depends on traces of elements in the criterion set).

\section{Variations of the criterion set definition} \label{se:weaker}

We conclude the paper by considering two alternative natural definitions of criterion sets, and in both cases we show that they do not affect their structure.

First, in Theorem \ref{th:free} we note that if one works with free quadratic lattices (that directly correspond to quadratic forms) instead of all lattices, the criterion sets do not change. Although this is almost a direct corollary of Theorem \ref{th:unique}, we consider it somewhat surprising: Switching from forms to lattices makes no difference, whereas switching from classical to non-classical lattices changes the size of the set considerably (compare the 15- and 290-Theorems).

\begin{theorem} \label{th:free}
Let $\CC \subset S$ and $X\in\{\cl,\nc\}$. Then $\CC$ is an $(S,X)$-criterion set if and only if the following equivalence holds for every \emph{free} quadratic $X$-lattice $L$: $L$ is $S$-universal if and only if $L$ is $\CC$-universal. 
\end{theorem}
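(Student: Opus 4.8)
The forward implication is immediate: every free $X$-lattice is in particular an $X$-lattice, so if the equivalence of Definition~\ref{de:crit} holds for all $X$-lattices, it holds a fortiori for the free ones. The entire content therefore lies in the converse, and since the implication ``$S$-universal $\Rightarrow$ $\CC$-universal'' is automatic from $\CC\subset S$, I only need to prove the following: if the displayed equivalence holds for every free $X$-lattice, then every $\CC$-universal $X$-lattice (free or not) is $S$-universal. The plan is to reduce this to one \emph{free-ification} step, namely that any $X$-lattice can be enlarged to a free $X$-lattice without gaining a representation of a prescribed element. Granting this, suppose for contradiction that $L$ is a $\CC$-universal $X$-lattice which is not $S$-universal, and pick $\alpha\in S$ with $\alpha\not\to L$. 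As $L$ represents all of $\CC$ but not $\alpha$, we have $\alpha\notin\CC$, whence $\CC\subset S\setminus\{\alpha\}$. The free-ification step produces a free $X$-lattice $\tilde L$ representing everything $L$ does (so still $\CC$-universal) but still not representing $\alpha\in S$ (so not $S$-universal), contradicting the free equivalence.

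For the free-ification step I would proceed as follows. Write the Steinitz class of $L$ as $[\mathfrak a]$ and choose an \emph{integral} ideal $\mathfrak b$ in the inverse class $[\mathfrak a]^{-1}$. Let $M=(\mathfrak b v,Q)$ be the rank-one lattice with $Q(v)=n$ for a rational integer $n>\NN(\alpha)^{1/d}$, where $d=[K:\Q]$. Then $M$ is a genuine $X$-lattice for both $X=\nc$ and $X=\cl$: it is positive definite since $n>0$, and the containment $n\mathfrak b^2\subset\O_K$ (valid because $\mathfrak b$ is integral) shows it is both integral and classical. Its Steinitz class is $[\mathfrak b]=[\mathfrak a]^{-1}$, so, since Steinitz classes multiply under $\perp$, the lattice $\tilde L=L\perp M$ has trivial Steinitz class and is thus free; it is an $X$-lattice because orthogonal sums preserve classicality. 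Clearly $\tilde L$ represents every element represented by $L$.

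The one place requiring genuine care --- and the main obstacle --- is verifying that the auxiliary summand $M$ does not create a representation of $\alpha$, i.e.\ that $\alpha\not\to\tilde L$. A hypothetical representation has the form $\alpha=Q(w)+n\xi^2$ with $w\in L$ and $\xi\in\mathfrak b\subset\O_K$, where $Q(w)\succeq 0$ and $n\xi^2\succeq 0$ are totally nonnegative. If $\xi\neq 0$, then $\xi$ is a nonzero algebraic integer, so $\NN(\xi)^2\geq 1$ and hence $\NN(n\xi^2)=n^d\NN(\xi)^2\geq n^d>\NN(\alpha)$. Combined with the elementary inequality $\NN(x+y)\geq\NN(y)$ for totally nonnegative $x$ and totally positive $y$ (applied exactly as in the proof of Proposition~\ref{pr:critical}), this yields $\NN(\alpha)\geq\NN(n\xi^2)>\NN(\alpha)$, a contradiction. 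Therefore $\xi=0$ and $\alpha=Q(w)\to L$, contradicting $\alpha\not\to L$. This establishes the free-ification step and completes the proof.

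I expect the bookkeeping around Steinitz classes and the admissibility of $M$ to be routine, while the norm estimate above is the crux: it is what lets one repair non-freeness by an orthogonal summand while keeping the critical element unrepresented, by pushing all new nonzero values to large norm. Finally, one may note that $X=\diag$ is excluded precisely because this construction need not preserve diagonality; it is in any case unnecessary, since every diagonal lattice is already free and the asserted equivalence is then trivial. The argument can equivalently be packaged through Theorem~\ref{th:unique}, using that $\CC\subset S$ fails to be a criterion set exactly when some critical $\alpha\notin\CC$ exists and that criticality supplies the lattice $L$ to be made free; but the direct version above avoids even invoking criticality explicitly.
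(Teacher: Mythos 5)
Your proof is correct and rests on exactly the same key construction as the paper's: repairing non-freeness by appending a rank-one summand $\mathfrak{a}\cdot\qf{\beta}$ with $\mathfrak{a}$ an integral ideal in the inverse Steinitz class and $\NN(\beta)>\NN(\alpha)$, so that the norm inequality $\NN\bigl(Q(w)+\beta\xi^2\bigr)\geq\NN(\beta\xi^2)\geq\NN(\beta)>\NN(\alpha)$ forces $\xi=0$ and hence $\alpha\not\to\tilde L$. The only difference is organizational: the paper reduces via Theorem \ref{th:unique} to showing that every $(S,X)$-critical element lies in $\CC$ and applies the free-ification to the witness lattice from criticality, whereas you apply it directly to an arbitrary $\CC$-universal, non-$S$-universal lattice to get an immediate contradiction --- both packagings are equally valid, as you yourself observe.
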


In other words, the $(S,X)$-criterion set for free lattices is the same as the $(S,X)$-criterion set for all lattices. In particular, if $S = \OO$, we get $\Cfr_K=\CC_K$ and $\Cclfr_K= \Ccl_K$.  
As  diagonal lattices are free by definition, the case $X=\diag$ is not included in the previous theorem.

\begin{proof}
Trivially, an $(S,X)$-criterion set for lattices is also an $(S,X)$-criterion set for free quadratic lattices. Let now $\CC \subset S$ be some $(S,X)$-criterion set for free quadratic lattices. By Theorem \ref{th:unique}, it is enough to show that $\CC$ contains all $(S,X)$-critical elements.

Consider an $(S,X)$-critical element $\alpha$ and take the $\bigl(S \setminus\{\alpha\}\bigr)$-universal lattice $L$ not representing $\alpha$. Take arbitrary $\beta \in \OPlus_K$ with $\NN(\beta) > \NN(\alpha)$, e.g., $\beta=\alpha+1$. Then consider the lattice $L' = L \perp \mathfrak{a}\cdot\qf{\beta}$ where $\mathfrak{a}$ is an integral ideal such that $L'$ is a free lattice (i.e., $\mathfrak{a}$ lies in the same class as $\mathfrak{b}^{-1}$ where $\mathfrak{b}$ is the Steinitz class of $L$, i.e., $L=\O_K v_1 + \cdots + \O_K v_{n-1} + \mathfrak{b}v_n$). By construction, $L'$ is free (see \cite[Theorem 1.39]{Nar}) and $\bigl(S \setminus\{\alpha\}\bigr)$-universal; and since $\mathfrak{a}\cdot\qf{\beta}$ represents only elements with $\NN(\,\cdot\,)>\NN(\alpha)$, we again get $\alpha \not\to L'$.

Thus every $(S,X)$-criterion set $\CC$ for free quadratic lattices must contain $\alpha$.
\end{proof}

Next, we will discuss the following variation of the definition of a criterion set: For a set $\CC \subset \OO$, not necessarily $\CC \subset S$, and for $X\in\{\mathrm{cl},\mathrm{nc},\mathrm{diag}\}$ we say that it is a \emph{generalized $(S,X)$-criterion set} if for every quadratic $X$-lattice $L$ we have: \enquote{$L$ is $S$-universal if and only if it is $\CC$-universal}. Namely, the only change is that we no longer require $\CC \subset S$.

Note that this variation makes no difference for the arguably main case $S=\OO$. Also observe that only after this variation, it started to be important to state the definition as an equivalence. As long as we assumed $\CC \subset S$, the implication \enquote{$S$-universal $\Rightarrow$ $\CC$-universal} was trivial. However, here it does play a role, as we now illustrate.

\begin{example}
Let $K=\Q$ and $S = \{(2k)^2 : k \in \N\}$. Clearly each lattice representing $1$ is $S$-universal, but it is not difficult to see that there exists a quadratic lattice which represents all of $S$ but not $1$, for example $\qf{4,4,4,4}$. Thus $\{1\}$ is not a generalized $(S,X)$-criterion set, for each $X\in\{\mathrm{cl},\mathrm{nc},\mathrm{diag}\}$.
\end{example}

In the previous example, if we want to know whether a given lattice is $S$-universal, it is natural to start by checking whether it represents $1$, since this simple test can possibly give us the full answer. Hence $\{1\}$ provides some information about the $S$-universality of lattices. However, for a set to be called (generalized) $(S,X)$-criterion set, this set must capture full information about $S$-universal lattices, and this is not the case of $\{1\}$.

Clearly, there is a difference between the notions of an \emph{$(S,X)$-criterion set} and a \emph{generalized $S$-criterion set}: Each set containing an $(S,X)$-criterion set but not contained in $S$ belongs to the latter, but not the former category. Now we show that this trivial difference is the only one.

\begin{theorem} \label{th:generalized}
Let $S,\CC \subset \OO$ and $X\in\{\cl,\nc\}$. If $\CC$ is a generalized $(S,X)$-criterion set, then it contains $\CC_S^X$, the set of all $(S,X)$-critical elements. In particular, the minimal $(S,X)$-criterion set $\CC_S^X$ is also the unique minimal generalized $(S,X)$-criterion set.
\end{theorem}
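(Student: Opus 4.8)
The plan is to prove that every generalized $(S,X)$-criterion set $\CC$ contains $\CC_S^X$, since the ``in particular'' statement then follows immediately: $\CC_S^X$ itself is a generalized criterion set (being an ordinary one, with $\CC_S^X \subset S$), and it is contained in every other generalized criterion set, hence it is the unique minimal one. So the whole content is the containment $\CC_S^X \subset \CC$.

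First I would fix an arbitrary $(S,X)$-critical element $\alpha$ and aim to show $\alpha \in \CC$. By definition of criticality, there is an $X$-lattice $L$ which is $(S\setminus\{\alpha\})$-universal but does not represent $\alpha$. The strategy is to produce, from $L$, two lattices that a generalized criterion set must ``distinguish'': one that is $S$-universal and one that is not, but which agree on all elements of $\CC \setminus \{\alpha\}$, so that the defining equivalence for $\CC$ forces $\alpha \in \CC$. The cleanest way is to compare $L$ against a genuinely $S$-universal lattice. By Theorem~\ref{th:unique}(a), $\CC_S^X$ is an $(S,X)$-criterion set, so by Proposition~\ref{pr:critical} (applied as in the proof there) we may orthogonally escalate $L$ to an $X$-lattice $L^{+}$ that \emph{is} $S$-universal, adding only summands $\qf{\beta}$ with $\NN(\beta) > \NN(\alpha)$; in fact $L^{+} = L \perp \qf{\beta_1,\dots,\beta_r}$ with each $\NN(\beta_i) \geq \NN(\alpha)$, and exactly as in the proof of Proposition~\ref{pr:critical} one checks $\alpha \not\to L$ still while $L^{+}$ represents everything in $S$.

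Now I would argue by contradiction: suppose $\alpha \notin \CC$. The lattice $L$ is $(S\setminus\{\alpha\})$-universal and $\alpha \notin \CC$, so $L$ represents every element of $\CC$ (each element of $\CC$ is either in $S\setminus\{\alpha\}$, hence represented, or equals $\alpha$, which we excluded from $\CC$); thus $L$ is $\CC$-universal. Since $\CC$ is a generalized $(S,X)$-criterion set, the defining equivalence gives that $L$ is $S$-universal. But $L$ does not represent $\alpha \in S$, a contradiction. Hence $\alpha \in \CC$, and as $\alpha$ was an arbitrary critical element, $\CC_S^X \subset \CC$. Note this argument uses only the direction ``$\CC$-universal $\Rightarrow$ $S$-universal'' of the equivalence, which is precisely the nontrivial direction that survives when we drop the hypothesis $\CC \subset S$; the escalated lattice $L^{+}$ is not even needed for this clean version, though it is available as a safety net.

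The main obstacle, and the point where care is required, is making sure the argument genuinely uses the \emph{equivalence} in the generalized definition rather than a trivial implication: when $\CC \not\subset S$, the implication ``$S$-universal $\Rightarrow$ $\CC$-universal'' is no longer automatic, and the Example preceding the theorem shows one cannot simply assume it. The resolution above sidesteps this entirely by only invoking the other direction, ``$\CC$-universal $\Rightarrow$ $S$-universal'', which holds by hypothesis. A secondary point to verify is that $X$ is restricted to $\{\cl,\nc\}$ and the escalation preserves the $X$-lattice property, exactly as recorded at the end of the proof of Proposition~\ref{pr:critical}; since we ultimately avoid needing $L^{+}$, the only structural fact used is that the witnessing lattice $L$ for criticality is itself an $X$-lattice, which is immediate from Definition~\ref{de:critical}.
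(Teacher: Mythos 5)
There is a genuine gap at the central step. When you assume $\alpha \notin \CC$ and conclude that the witness lattice $L$ is $\CC$-universal, your parenthetical justification asserts that every element of $\CC$ is either in $S\setminus\{\alpha\}$ or equals $\alpha$ --- that is, $\CC \subset S$. But that is exactly the hypothesis the generalized definition drops: $\CC$ may contain elements $\gamma \notin S$, and the only thing you know about such $\gamma$ is that every \emph{$S$-universal} lattice represents it (from the direction \enquote{$S$-universal $\Rightarrow$ $\CC$-universal}). Since $L$ is by construction \emph{not} $S$-universal (it misses $\alpha$), nothing guarantees $\gamma \to L$, so you cannot conclude that $L$ is $\CC$-universal and the contradiction never materializes. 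Your closing remark that you \enquote{sidestep} the issue by using only the direction \enquote{$\CC$-universal $\Rightarrow$ $S$-universal} is where the error hides: establishing the premise \enquote{$L$ is $\CC$-universal} is itself the problematic part, and in fact the other direction of the equivalence is indispensable (it is what forces $\CC \subset \mathcal H_S^X$). Your argument as written proves the containment only for $\CC \subset S$, which is already Theorem \ref{th:unique}(b).

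To repair it you would need a witness lattice that is $\bigl(\mathcal H_S^X \setminus \{\alpha\}\bigr)$-universal while still omitting $\alpha$, where $\mathcal H_S^X$ is the set of all classes represented by every $S$-universal $X$-lattice; this is not immediate from the escalation in Proposition \ref{pr:critical}, since elements of $\mathcal H_S^X \setminus S$ of norm below $\NN(\alpha)$ need not be represented by $L$, so the norm inequality protecting $\alpha$ during escalation is no longer available. The paper avoids the construction entirely: it observes that $S$-universality and $\mathcal H_S^X$-universality coincide, that every generalized $(S,X)$-criterion set is contained in $\mathcal H_S^X$, hence is an honest $(\mathcal H_S^X,X)$-criterion set, and then identifies $\CC_S^X$ with $\mathcal C_{\mathcal H}^X$ by a purely formal minimality argument using Theorem \ref{th:unique} applied to $\mathcal H_S^X$. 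Some such detour through the maximal set (or an equivalent device) appears to be necessary.
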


Before proving this statement, we observe that it yields full characterization of generalized $S$-criterion sets. Namely, there exists (quite clearly) the following maximal generalized $S$-criterion set:

Denote by $\mathcal H_S^X \subset \OO$ the set consisting of all those (square classes of) elements which are represented by every $S$-universal $X$-lattice. I.e., if $D_L$ is the set of all elements represented by a given $X$-lattice $L$ and $D_L / \U_K^2 = \{\alpha \U^2_K \mid \alpha \in D_L\}$ is the corresponding set modulo squares of units, we can write
\[
\mathcal H_S^X = \bigcap_{\text{$L$ is $S$-universal}} D_L / \U_K^2.
\]

Then by definition, every generalized $(S,X)$-criterion set is contained in $\mathcal H_S^X$, so we have (assuming Theorem \ref{th:generalized}) the following characterization: \enquote{$\CC$ is a generalized $(S,X)$-criterion set if and only if $\CC_S^X\subset \CC \subset \mathcal H_S^X$.} For now, we only know the latter inclusion.

In general, determining the full set $\mathcal H_S^X$ seems a very hard problem for almost any given $S$. For example, DeBenedetto \cite{De} showed that every prime-universal quadratic form over $\Z$ represents $205$; thus, in our notation, for $S = \mathbb{P} = \{\text{primes}\}$, we have $205 \in \mathcal H_{\mathbb{P}}^X$. Theorem \ref{th:generalized}, which we are about to prove, shows for example that whenever $\CC$ is a generalized $(\mathbb{P},X)$-criterion set, then so is $\CC \setminus \{205\}$. There is only one minimal generalized $(\mathbb{P},X)$-criterion set and it is a subset of $\mathbb P$.

\begin{proof}[Proof of Theorem $\ref{th:generalized}$]
Given a set $S \subset \OO$, let us consider the set $\mathcal H_S^X$ defined above and  denote it just as $\mathcal H$ throughout the proof. By definition, a lattice is $S$-universal if and only if it is $\mathcal H$-universal. Thus, the following are equivalent (for a set $\CC \subset \OO$):
 \begin{enumerate}[label=\alph*)]
     \item[(a)] $\CC$ is a generalized $(S,X)$-criterion set.
     \item[(b)] $\CC$ is a generalized $(\mathcal H,X)$-criterion set.
 \end{enumerate}
However, every generalized $(S,X)$-criterion set is contained in $\mathcal H$, so there is a further equivalent condition:
 \begin{enumerate}
     \item[(c)] $\CC$ is an $(\mathcal H,X)$-criterion set.
 \end{enumerate}
Using this condition and applying Theorem \ref{th:unique} for $\mathcal H$ yields the existence of a set $\mathcal C_{\mathcal H}^X$ such that all the above conditions are equivalent to
 \begin{enumerate}
     \item[(d)] $\mathcal C_{\mathcal H}^X \subset \CC \subset \mathcal H$.
 \end{enumerate}
This set $\mathcal C_{\mathcal H}^X$ can be described as the set of all $(\mathcal H,X)$-critical elements, but that is not important. We will show that $\CC_S^X = \mathcal C_{\mathcal H}^X$:

Since $S$ itself is a generalized $(S,X)$-criterion set, we have $\mathcal C_{\mathcal H}^X \subset S$. This shows that not only is $\mathcal C_{\mathcal H}^X$ a generalized $S$-criterion set thanks to satisfying (d), but actually an ``honest'' $(S,X)$-criterion set. Since $\CC_S^X$ is the unique smallest $(S,X)$-criterion set, this implies $\CC_S^X \subset \mathcal C_{\mathcal H}^X$. On the other hand, since $\CC_S^X$ is a generalized $(S,X)$-criterion set, (d) yields $\mathcal C_{\mathcal H}^X \subset \CC_S^X$. Thus, indeed, $\CC_S^X = \mathcal C_{\mathcal H}^X$. So by (d), every generalized $(S,X)$-criterion set contains $\CC_S^X$.
\end{proof}

As the maximal criterion sets $\mathcal H_S^X$ correspond bijectively to the \textit{finite} minimal sets $\mathcal C_S^X$, we see that there are only countably many different maximal criterion sets (even though there are uncountably many sets $S\subset\OO$). Given $S$, explicitly determining either the minimal or maximal criterion set is a difficult task for which no algorithm is known (although see Remark \ref{rem:34}).

\section{Open problems}\label{se:6}

Let us conclude the paper by listing several open problems, which may stimulate rich further research.

\subsection{Growth of criterion sets}\label{subsec:growth}

\textit{How does the size $\#\CC_K$ grow with the field $K$ (say, in terms of its discriminant)?}
Corollary \ref{pr:quadratic} implies that 
$\#\CC_K \geq m_K$, and it is known that $m_K$ is often large (see the discussion in the introduction), and so also $\#\CC_K$ is often large.

However, one may be tempted to ask about even stronger results. 

\begin{question}\label{que1}
    Fix positive integers $d$ and $c$. Are there only finitely many totally real fields $K$ of degree $[K:\Q]=d$ such that $\#\CC_K\leq c$?
\end{question}

Theorem \ref{th:indecomposable} shows that squarefree indecomposables lie in criterion sets. The results on the geometry of continued fractions and sails \cite{KM, Man} suggest that, when $d=[K:\Q]\geq 3$, the number of indecomposables may grow with the discriminant of $K$. One then would expect also the number of squarefree indecomposables to grow, leading us to expect that Question \ref{que1} has positive answer when $d\geq 3$.

However, the situation is different in the case $d=2$ of real quadratic fields, as there are infinite families of quadratic fields with very few indecomposables. For example, if $n$ is a positive integer such that $n^2-1$ is squarefree, then all indecomposables in $\Q(\sqrt{n^2-1})$ are units, so Theorem \ref{th:indecomposable} contributes only two elements of $\OO$ to $\Cdiag_K$, namely $1$ and the fundamental unit $\epsilon$. In a work in progress, \cite{KM+} show that these fields indeed have bounded sizes of $\CC_K^X$ -- but the case of other real quadratic fields is much less clear.

\subsection{Explicit criterion sets}\label{subsec:expl}

Criterion sets $\CC_K^X$ are known only in very few cases: Besides the sets of 15- and 290-Theorems over $\Z$, the only other result has been Lee's determination of $\Ccl_{\Q(\sqrt 5)}$ \cite{Le}. The follow-up paper \cite{KR} will obtain several (mostly conjectural) results for $\Q(\sqrt 2), \Q(\sqrt 3), \Q(\sqrt 5)$, 
but using the methods of Proposition \ref{pr:critical} and Remark \ref{rem:34}, it should be possible to \textit{computationally generate candidates for criterion sets for many more (particularly real quadratic) fields}. These results would provide evidence for Question \ref{que1}.

More generally, one should \textit{search for sets $\CC_S^X$ for various natural choices of $S$}.

\subsection{Provable criterion sets}

Once equipped with conjectural data from \ref{subsec:expl}, it would be tempting to prove some of these results. At least when dealing with a single field $K$ with $m_K\geq 5$, one may attempt to use the asymptotic local-global principle  \cite{HKK} (and its explicit version due to Hsia--Icaza \cite{HI}) for this purpose, although the bounds will probably be too large to be computationally feasible.

One may also try to answer the following question theoretically, at least for some family of fields (e.g., those having odd discriminant and sufficiently many square classes of totally positive units \cite{KM+}).

\begin{question}
Can we find an explicit bound $B_K$ such that all elements of $\CC_K$ must have $N(\alpha)<B_K$?
\end{question}

\subsection{Very small criterion sets}

Concerning Question \ref{que1}, for very small values of $c$, we suspect that there are only finitely many such fields $K$ overall, without restricting the degree $d$. \textit{Can one find all fields with $\#\CC_K\leq 6$, say?}

\subsection{Relation between different criterion sets} 
In Theorem \ref{th:unique} we proved that $\Cdiag_K \subset \Ccl_K \subset \CC_K$; apriori, we do not know anything more about the relations between these sets. It is thus somewhat surprising that $\Cdiag_\Q = \Ccl_\Q$ (while $\CC_\Q$ is much larger) \cite{Bh,BH}. The computations \cite{KR} show that $\#\Cdiag_{\Q(\sqrt 5)} = \#\Ccl_{\Q(\sqrt 5)}-1$, and so these sets can be different. However, it seems at least plausible that there is some general relationship between them, perhaps related to the fact that sublattices of diagonal lattices are classical. 
\textit{Is there some precise sense in which $\Cdiag_K$ and $\Ccl_K$ have comparable size?}

\subsection{Lifting of criterion sets}

Let $F\subsetneq K$ be totally real fields. 
The results of Section \ref{se:indec} show various partial relations between criterion sets for $F$ and $K$. But they leave open the following question in the spirit of the lifting problem \cite{KY1}: \textit{Can $\O_F^+$ be a criterion set for $K$?}
It is quite likely that the answer is always \textit{No!}; we can at least easily show the following.

\begin{theorem}
    Let $F=\Q$ or $\Q(\sqrt 5)$ and let $K\supsetneq F$ be a totally real field. Then $\O_F^+$ is not an $X$-criterion set for $K$ (for $X\in\{\nc,\cl,\diag\}$).
\end{theorem}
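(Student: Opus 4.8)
The plan is to show that $\O_F^+$, viewed as a subset of $\OO$, fails to be a criterion set by exhibiting a non-$S$-universal lattice that nonetheless represents all of $\O_F^+$. Equivalently, by Theorem \ref{th:unique}(b), it suffices to produce an element of $\CC_K^X$ (a critical element of $K$) that does \emph{not} lie in $\O_F^+$, i.e.\ does not arise from a totally positive integer of $F$. Since $\Cdiag_K \subset \Ccl_K \subset \CC_K$ and we want the statement for all three choices of $X$, the cleanest route is to find a \emph{diagonal}-critical element outside $\O_F^+$, as this will witness non-criterion-set-ness for the largest relevant class and (because a $\diag$-critical element lies in every $\CC_K^X$) settle all three cases at once. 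Actually, one must be slightly careful about the direction of the inclusions here: an element critical for $\diag$ need not be critical for $\nc$, so the safest formulation is to find, for each $X$, a critical element outside $\O_F^+$; but a single well-chosen element will typically work for all.

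\textbf{The natural candidate} is a squarefree indecomposable element of $\O_K$ that is not a $\U_K^2$-multiple of an element of $\O_F^+$. By Theorem \ref{th:indecomposable}, any squarefree indecomposable $\beta \in \OPlus_K$ automatically lies in $\Cdiag_K$, hence in $\Ccl_K$ and $\CC_K$. So the entire problem reduces to: \emph{find a squarefree indecomposable $\beta \in \OPlus_K$ whose class in $\OO$ does not meet $\O_F^+$.} First I would handle the base case $F=\Q$: here $\O_F^+ = \Z^+$, and I claim $K \supsetneq \Q$ always contains a squarefree indecomposable $\beta \notin \Z^+ \cdot \U_K^2$. The point is that the only indecomposables lying (up to unit squares) in $\Q$ are rational, whereas a proper totally real extension $K/\Q$ must contain genuinely irrational totally positive integers, and among the indecomposables of $K$ there must be irrational ones — otherwise every totally positive integer would decompose into rational summands, forcing $\O_K^+ \subset \Z^+ + \Z^+ + \cdots$, which is false for $[K:\Q]\ge 2$. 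One then passes to a \emph{squarefree} irrational indecomposable; indecomposability is preserved under the relevant reductions, and the squarefree condition can be arranged since an indecomposable element is already not divisible by nontrivial squares (a square factor $\omega^2$ with $\omega$ a nonunit would contradict indecomposability for totally positive $\omega^2$ dividing $\beta$).

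\textbf{For $F = \Q(\sqrt5)$} the argument is the same in spirit but requires knowing that $K \supsetneq \Q(\sqrt5)$ still supplies a squarefree indecomposable outside $\O_F^+ \cdot \U_K^2$. Here I would again argue that if every squarefree indecomposable of $K$ lay in $\O_F^+$ up to unit squares, then the additive structure of $\OPlus_K$ would be generated (via the indecomposable decomposition of \cite{HK}-type results) by elements of $F$, contradicting $K \supsetneq F$; the special status of $\Q(\sqrt5)$ in the hypothesis reflects that over $\Q(\sqrt5)$ itself the indecomposables are all units, but once we pass to a strictly larger $K$ new irrational indecomposables appear. \textbf{The main obstacle} I anticipate is precisely this existence-of-an-irrational-indecomposable step: one must rule out the degenerate possibility that \emph{all} indecomposables of $K$ (up to $\U_K^2$) descend to $F$. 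For real quadratic $K$ this is controlled by the explicit continued-fraction description of indecomposables, but for general $K \supsetneq F$ it requires a structural argument that totally positive integers genuinely new to $K$ cannot all be written as sums of $F$-integers — I would establish this by a trace or norm comparison, choosing $\beta$ with an embedding behavior (e.g.\ very unequal conjugates) incompatible with membership in $\O_F^+ \cdot \U_K^2$, and then invoking Theorem \ref{th:indecomposable} to conclude $\beta \in \Cdiag_K \setminus \O_F^+$.
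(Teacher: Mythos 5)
Your overall strategy --- exhibit an element of $\CC_K^X$ whose class in $\OO$ does not meet $\O_F^+$, so that Theorem \ref{th:unique}(b) rules out $\O_F^+$ being a criterion set --- is sound in principle, and the inclusions $\Cdiag_K\subset\Ccl_K\subset\CC_K$ do mean (contrary to your parenthetical worry) that a single $\diag$-critical element would settle all three cases at once. The genuine gap is that the element you propose need not exist. For $F=\Q$ and $K=\Q(\sqrt5)$ --- a case covered by the theorem --- \emph{every} indecomposable of $\OPlus_K$ is a totally positive unit (see Example \ref{ex:explicitForm}), and since $\UPlus_K=\U_K^2$ in $\Q(\sqrt5)$, every squarefree indecomposable lies in the single class $1\cdot\U_K^2\subset\Z^+\cdot\U_K^2$. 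So there is no squarefree indecomposable outside $\O_F^+$, and Theorem \ref{th:indecomposable} gives you nothing. Your existence argument (\enquote{otherwise every totally positive integer would decompose into rational summands, forcing $\O_K^+\subset\Z^++\Z^++\cdots$}) is exactly where this breaks: indecomposables that are rational only \emph{up to squares of units} decompose a general element into terms $n_i\epsilon_i^2$, whose sums are typically irrational, so no contradiction arises. The same unresolved issue afflicts the case $F=\Q(\sqrt5)$: you would have to prove that every $K\supsetneq\Q(\sqrt5)$ possesses a squarefree indecomposable whose class does not descend to $F$, and the trace/norm comparison you gesture at is neither carried out nor obviously available in general.

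For contrast, the paper's proof avoids indecomposables entirely: if $\O_F^+$ were a criterion set, then $\qf{1,1,1,1}$, being universal over $F$ (Lagrange for $\Q$, Maass for $\Q(\sqrt5)$) and hence $\O_F^+$-universal over $K$, would be universal over $K$; Siegel's theorem forces $K\in\{\Q,\Q(\sqrt5)\}$, leaving only $F=\Q$, $K=\Q(\sqrt5)$, which is dispatched by the explicit form $\qf{1,1,3,3}$ representing all of $\Z^+$ but not $(7+\sqrt5)/2$. This also explains why the hypothesis singles out $\Q$ and $\Q(\sqrt5)$: they are precisely the fields with a universal sum of squares. Note that the paper's witness $(7+\sqrt5)/2$ is decomposable, so any repair of your approach must draw critical elements from a source other than Theorem \ref{th:indecomposable} in the fields where all squarefree indecomposables descend to $F$.
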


\begin{proof}
    For the sake of contradiction, assume that $\O_F^+$ is an $X$-criterion set for $K$. As the sum of four squares $\qf{1,1,1,1}$ is universal over $\O_F^+$ for $F=\Q$ and $\Q(\sqrt 5)$ \cite{M}, it must be also universal over $K$ (as we are assuming that $\O_F^+$ is an $X$-criterion set).
    But this happens only when $K=\Q$ and $\Q(\sqrt 5)$ by Siegel's theorem \cite{Si}. As $K\supsetneq F$, this leaves only the case $F=\Q$, $K=\Q(\sqrt 5)$. However, $\qf{1,1,3,3}$ represents all elements of $\Z^+$, but one easily checks that it does not represent $(7+\sqrt 5)/2$ over $\O_K$.   
\end{proof}

\subsection{Critical elements}

Definition \ref{de:truant} and Proposition \ref{pr:critical}
characterized critical elements in terms of lattices that represent all elements of smaller norm; this later turned out to be the key in proving uniqueness of minimal criterion sets. However, for some purposes, it may be more natural to order elements by their trace or house (the \textit{house} of an algebraic number is defined as the maximum of the absolute values of its conjugates). \textit{Does a variant of Proposition $\ref{pr:critical}$ hold if we define truants in terms of trace or house?} The argument given in the proof of Proposition \ref{pr:critical} breaks, but that does not mean that the statement is not true.

\subsection{Maximal criterion sets}\label{subsec:max}

One can use the method of escalations to generate a list of small elements that must lie in the ``maximal'' sets $\mathcal H_S^X$. \textit{Is there a more theoretical way of understanding their elements?} Again, first obtaining significant computational data as in \ref{subsec:expl} may be very helpful.

\section*{Declarations}

\textbf{Data sharing:} Not applicable to this article as no datasets were generated or analysed during the current study.

\noindent \textbf{Competing interests:} The authors have no competing interests to declare that are relevant to the content of this article.

\noindent \textbf{Funding:} 
V. K. was supported by Czech Science Foundation [grant numbers 21-00420M, 26-20514S].
G. R. was supported by Charles University Primus Programme [grant number 25/SCI/008].

\section*{Acknowledgments} We thank the members of \href{https://www1.karlin.mff.cuni.cz/~kala/web/ufoclan}{UFOCLAN} group for numerous interesting discussions about the topics of the paper. We are very grateful to the anonymous referee for several helpful comments and suggestions.


\begin{thebibliography}{WWW}
{\small

\bibitem[Be1]{Be1} C. N. Beli, \textit{A new approach to classification of integral quadratic forms over dyadic local fields}, Trans. Amer. Math. Soc. \textbf{362} (2010), 1599--1617.

\bibitem[Be2]{Be2} C. N. Beli, \textit{Universal integral quadratic forms over dyadic local fields}, \href{https://arxiv.org/abs/2008.10113}{arxiv:2008.10113}.

\bibitem[Bh]{Bh} M. Bhargava, \emph{On the Conway-Schneeberger fifteen theorem}, Contemp. Math. \textbf{272} (1999), 27--37.

\bibitem[BH]{BH} M. Bhargava, J. Hanke, \emph{Universal quadratic forms and the 290-theorem}, 2011, preprint, \url{https://math.stanford.edu/~vakil/files/290-Theorem-preprint.pdf}


	\bibitem[BK]{BK1} V. Blomer,  V. Kala, {\em Number fields without universal $n$-ary quadratic forms}, Math. Proc. Cambridge Philos. Soc. \textbf{159} (2015), 239--252.



		\bibitem[BK2]{BK2} V. Blomer, V. Kala, \emph{On the rank of universal quadratic forms over real quadratic fields}, Doc. Math. \textbf{23} (2018), 15--34.
		

\bibitem[BC]{BC} M. Bordignon, G. Cherubini, \textit{Coprime-universal quadratic forms}, \href{https://arxiv.org/abs/2406.01533}{arxiv:2406.01533}.


		\bibitem[CL+]{CL+} M. \v Cech, D. Lachman, J. Svoboda, M. Tinkov\' a, K. Zemkov\' a, \emph{Universal quadratic forms and indecomposables over biquadratic fields}, Math. Nachr. \textbf{292} (2019), 540--555.
		

\bibitem[CKR]{CKR} W. K. Chan, M.-H. Kim, S. Raghavan, \emph{Ternary universal integral quadratic forms}, Japan. J. Math. \textbf{22} (1996), 263--273.


	\bibitem[CO]{CO} W. K. Chan, B.-K. Oh, {\em Can we recover an integral quadratic form by representing all its subforms?}, Adv. Math. \textbf{433} (2023), Paper No. 109317.


\bibitem[De]{De} J. DeBenedetto, \textit{Quadratic forms representing all primes}, Involve \textbf{7} (2014), 619--626. 

\bibitem[DR]{DR} 
J. DeBenedetto, J. Rouse, \textit{Quadratic forms representing all integers coprime to 3}, Ramanujan
J. \textbf{46} (2018),  431--446.

		\bibitem[DS]{DS} A. Dress, R. Scharlau, \emph{Indecomposable totally positive numbers in real quadratic orders}, J. Number Theory \textbf{14} (1982), 292--306.



\bibitem[Ea]{Ea} A. G. Earnest,\emph{Universal and regular positive quadratic lattices over totally real number fields}, Contemp. Math. \textbf{249}  (1999), 17--27.


	\bibitem[EK]{EK} A. G. Earnest, A. Khosravani, {\em Universal positive quaternary quadratic lattices over totally real number fields}, Mathematika \textbf{44} (1997), 342--347.

 \bibitem[EKK]{EKK} N. D. Elkies, D. M. Kane, S. D. Kominers, \textit{Minimal $S$-universality criteria may vary in size}, J. Théor. Nombres Bordeaux \textbf{25} (2013), 557--563.


\bibitem[HH]{HH} Z. He, Y. Hu, 
\textit{On $n$-universal quadratic forms over dyadic local fields},
Sci. China Math. \textbf{67} (2024), 1481--1506.

\bibitem[HHX]{HHX} Z. He, Y. Hu, F. Xu, \emph{On indefinite {$k$}-universal integral quadratic forms over number fields}, Math. Z. \textbf{304} (2023), Paper No. 20. 


\bibitem[HK]{HK} T. Hejda, V. Kala, \textit{Additive structure of totally positive quadratic integers}, Manuscripta Math. \textbf{163}  (2020), 263--278.

\bibitem[HI]{HI} J. S. Hsia, M. I. Icaza, \textit{Effective version of Tartakowsky's theorem},
Acta Arith. \textbf{89} (1999), 235--253.

\bibitem[HKK]{HKK} J. S. Hsia, Y. Kitaoka, M. Kneser, \textit{Representations of positive definite quadratic forms}, J. Reine Angew. Math. \textbf{301} (1978), 132--141.

		
		\bibitem[JK]{JK} S. W. Jang, B. M. Kim, \emph{A refinement of the Dress-Scharlau theorem}, J. Number Theory \textbf{158} (2016), 234--243.

\bibitem[JuK]{JuK} J. Ju, D. Kim, \textit{
The pentagonal theorem of sixty-three and generalizations of Cauchy’s lemma}, Forum Math. \textbf{35} (2023),  1685--1706.

	\bibitem[Ka1]{Ka1} V. Kala, {\em Universal quadratic forms and elements of small norm in real quadratic fields}, Bull. Aust. Math. Soc. \textbf{94} (2016), 7--14.

 
		
		
  \bibitem[Ka2]{Ka1.5} V. Kala, \emph{Norms of indecomposable integers in real quadratic fields}, J. Number Theory \textbf{166} (2016), 193--207.
	
	\bibitem[Ka3]{Ka2} V. Kala, {\em Universal quadratic forms and indecomposables in number fields: A survey}, Commun. Math. \textbf{31} (2023), 81--114.

    \bibitem[Ka4]{Ka3} V. Kala, \textit{Number fields without universal quadratic forms of small rank exist in most degrees}, Math. Proc. Cambridge Philos. Soc. \textbf{174} (2023), 225--231.


\bibitem[KKK]{KKK} V. Kala, K. Kramer, J. Kr\' asensk\' y, \textit{Kitaoka's Conjecture and sums of squares}, \href{https://arxiv.org/abs/2510.19545}{arxiv:2510.19545}.

\bibitem[KK+]{KK+}  V. Kala, J. Kr\' asensk\' y, D. Park, P. Yatsyna, B. \. Zmija, \textit{Kitaoka's conjecture for quadratic fields}, \href{https://arxiv.org/abs/2501.19371}{arxiv:2501.19371}.

\bibitem[KM]{KM} V. Kala, S. H. Man, \textit{Sails for universal quadratic forms}, Selecta Math. (N.S.) \textbf{31} (2025), Paper No. 26.

\bibitem[KM+]{KM+} V. Kala, S. H. Man, R. Visser, P. Yatsyna, \textit{Explicit criterion sets for universal quadratic forms}, in preparation.

\bibitem[KP]{KP} V. Kala, O. Prakash, \textit{There is no 290-Theorem for higher degree forms}, Math. Nachr. \textbf{297} (2024), 4322--4332.
    
	\bibitem[KT]{KT} V. Kala,  M. Tinkov\'a, {\em Universal quadratic forms, small norms and traces in families of number fields}, Int. Math. Res. Not. IMRN (2023), 7541--7577.

	\bibitem[KY1]{KY1} V. Kala,  P. Yatsyna, {\em Lifting problem for universal quadratic forms}, Adv. Math. \textbf{377} (2021), Paper No.  107497.

\bibitem[KY2]{KY} V. Kala, P. Yatsyna, {\em On {K}itaoka's conjecture and lifting problem for universal quadratic forms}, Bull. Lond. Math. Soc., \textbf{55} (2023), 854--864. 



	\bibitem[KYZ]{KYZ} V. Kala, P. Yatsyna,  B. \.Zmija, {\em Real quadratic fields with a universal form of given rank have density zero}, Amer. J. Math. (to appear), \href{https://arxiv.org/abs/2302.12080}{arxiv:2302.12080}.

\bibitem[KaL]{KaL} B. Kane, J. Liu, \textit{Universal sums of $m$-gonal numbers}, Int. Math. Res. Not. IMRN (2020),  6999--7036.
 
	
	\bibitem[Ki]{Ki2} B. M. Kim, {\em Universal octonary diagonal forms over some real quadratic fields}, Comment. Math. Helv. \textbf{75} (2000), 410--414. 


\bibitem[KKO1]{KKO} B. M. Kim, M.-H. Kim, B.-K. Oh, \textit{2-universal positive definite integral
quinary quadratic forms}, Contemp. Math. \textbf{249} (1999), 51--62.


\bibitem[KKO2]{KKO2} B. M. Kim, M.-H. Kim, B.-K. Oh, \textit{A finiteness theorem for representability of quadratic forms by forms}, J. Reine Angew. Math. \textbf{581} (2005), 23--30.

\bibitem[KKP1]{KKP0} B. M. Kim, J. Y. Kim, P.-S. Park,
		\emph{The fifteen theorem for universal Hermitian lattices over imaginary quadratic fields},
		Math. Comp. \textbf{79} (2010), 1123--1144.
	
\bibitem[KKP2]{KKP} B. M. Kim, M.-H. Kim,  D. Park, {\em Real quadratic fields admitting universal lattice of rank $7$}, J. Number Theory \textbf{233} (2022), 456--466.
  
    \bibitem[KL]{KL} D. Kim, S. H. Lee, \textit{Lifting problem for universal quadratic forms over totally real cubic number fields},  Bull. Lond. Math. Soc. \textbf{56} (2024), 1192--1206.

\bibitem[KLO]{KLO} K. Kim, J. Lee, B.-K. Oh, \textit{Minimal universality criterion sets on the representations of binary quadratic forms}, 
J. Number Theory \textbf{238} (2022), 37--59.

\bibitem[Kim]{Ki} M.-H. Kim, \emph{Recent developments on universal forms}, Contemp. Math. \textbf{344} (2004), 215--228.


\bibitem[Ko]{Ko} C. Ko, \emph{On the representation of a quadratic form as a sum of squares of linear forms}, Q. J. Math. \textbf{1} (1937), 81--98.
		

\bibitem[Kom1]{Kom} S. D. Kominers, \textit{Uniqueness of the 2-universality criterion}, 
Note Mat. \textbf{28} (2008),  203--206.

\bibitem[Kom2]{Kom2} S. D. Kominers, 
\textit{Oh's 8-universality criterion is unique}, 
Kyungpook Math. J. \textbf{61} (2021), 455--459.


\bibitem[KR]{KR} J. Kr{\'a}sensk{\'y}, G. Romeo, \textit{Escalations and criteria over real quadratic fields}, in preparation.
	
	\bibitem[KTZ]{KTZ} J. Kr{\'a}sensk{\'y}, M. Tinkov{\'a},  K. Zemkov{\'a}, {\em There are no universal ternary quadratic forms over biquadratic fields}, Proc. Edinb. Math. Soc. \textbf{63} (2020), 861--912.

\bibitem[Lee]{Le} Y. M. Lee, \textit{Universal forms over $\Q(\sqrt5)$}, Ramanujan J. \textbf{16} (2008), 97--104.


\bibitem[Ma]{M} H. Maass, \emph{\"{U}ber die {D}arstellung total positiver {Z}ahlen des {K}\"{o}rpers {$R(5)$} als {S}umme von drei {Q}uadraten}, Abh. Math. Sem. Hamburg \textbf{14} (1941), 185--191.


    \bibitem[Man]{Man} S. H. Man, \textit{Minimal rank of universal lattices and number of indecomposable elements in real multiquadratic fields}, Adv. Math. \textbf{447} (2024), Paper No. 109694.

    \bibitem[Nar]{Nar} W. Narkiewicz, \emph{Elementary and analytic theory of algebraic numbers}, 3rd Edition, Springer-Verlag, Berlin, 2004.


\bibitem[Oh]{Oh} B.-K. Oh, \textit{Universal Z-lattices of minimal rank}, Proc. Amer. Math. Soc. \textbf{128} (2000), 683--689.

\bibitem [OM]{OM} O. T. O'Meara, {\em Introduction to quadratic forms}, Springer-Verlag, New York  (1963).



\bibitem[Ro]{Ro} J. Rouse, \emph{Quadratic forms representing all odd positive integers},
Amer. J. Math. \textbf{136} (2014), 1693--1745.


\bibitem[Sch]{Schur} I. Schur, \textit{\" Uber die Verteilung der Wurzeln bei gewissen algebraischen Gleichungen mit ganzzahligen Koeffizienten}, Math. Z. \textbf{1} (1918), 377--402.

\bibitem[Si1]{Si1} C. L. Siegel, \textit{The trace of totally positive and real algebraic integers}, Ann. of Math. \textbf{46} (1945), 302--312.
 
\bibitem[Si2]{Si} C. L. Siegel, {\em Sums of $m$-th powers of algebraic integers}, Ann. of Math. \textbf{46} (1945), 313--339.

\bibitem[St]{St} P. Stevenhagen, \textit{Radei reciprocity, governing fields and negative Pell}, Math. Proc. Cambridge Philos. Soc. \textbf{172} (2022), 627--654.


	\bibitem[Ti]{Ti} M. Tinkov\'a, \emph{Trace and norm of indecomposable integers in cubic orders}, Ramanujan J. \textbf{61} (2023), 1121--1144.
		
		
	\bibitem[TV]{TV} M. Tinkov\' a, P. Voutier, \emph{Indecomposable integers in real quadratic fields}, J. Number Theory \textbf{212} (2020), 458--482.

    \bibitem[XZ]{XZ} F. Xu, Y. Zhang, \textit{On indefinite and potentially universal quadratic forms over number fields}, Trans. Amer. Math. Soc. \textbf{375} (2022), 2459--2480.

	\bibitem[Ya]{Ya} P. Yatsyna, {\em A lower bound for the rank of a universal quadratic form with integer coefficients in a totally real field}, Comment. Math. Helv. \textbf{94} (2019), 221--239.

}
\end{thebibliography}
\end{document}